\documentclass[11pt,a4paper,english]{amsart}
\usepackage{graphicx}
\usepackage[left=25mm, right=25mm, top=25mm, bottom=25mm]{geometry}
\usepackage{color}
\usepackage{multicol}
\usepackage{mathrsfs}
\usepackage{indentfirst, amsfonts, amsmath, amsthm, amscd,amssymb}
\usepackage{epsfig}
\usepackage{hyperref}
\usepackage{amsmath}
\usepackage{amsthm}
\usepackage{amssymb}
\usepackage{amscd}
\usepackage{amsfonts}

\newcommand{\N}{\mathbb{N}} 
 
\newtheorem{theorem}{{Theorem}}[section]

\newtheorem{corol}[theorem]{{Corollary}}

\newtheorem{defin}[theorem]{{Definition}}

\pagenumbering{arabic}
\linespread{1.35}
\title[A delay nonlocal quasilinear Chafee-Infante problem]{A delay nonlocal quasilinear Chafee-Infante problem: An approach via semigroup theory }
\author{Tom\'{a}s Caraballo, A. N. Carvalho and Yessica Julio }
\address[TC]{Depto. Ecuaciones Diferenciales y Anal. Num.\\
Facultad de Matem\'{a}ticas\\
Universidad de Sevilla\\
C/ Tarfia s/n\\
41012-Sevilla (Spain)}
\email[TC]{caraball@us.es}
\address[ANC and YJ]{
Instituto de Ci\^{e}ncias Ma\-te\-m\'{a}\-ti\-cas e de Computa\c{c}\~{a}o 
Universidade de S\~{a}o Paulo, Campus de S\~{a}o Carlos, Caixa Postal 668, S\~{a}o Carlos SP, Brazil.}
\email[ANC]{andcarva@icmc.usp.br}
\email[YJ]{yessica.julio@usp.br}
\thanks{[TC] Partially supported by the Spanish Ministerio de Ciencia e Innovaci{\'o}n (MCI), Agencia Estatal de Investigaci{\'o}n (AEI) and Fondo Europeo de Desarrollo Regional (FEDER) under the project PID2021-122991NB-C21.}
\thanks{[ANC] Partially supported by FAPESP Grant \# 20/14075-6 and by CNPq Grant \# 308902/2023-8, Brazil}
\thanks{[YJ] Partially supported  by CAPES Grant \# 88887.695331/2022-00 and  by the Colombian Ministerio de Ciencia, Tecnolog\'{\i}a e Innovaci\'{o}n (Minciencias).}

\keywords{non-local quasilinear parabolic problems with delay without uniqueness, existence and regularity of solutions, comparison results, multivalued processes, global attractors, uniform bounds}

\subjclass[2020]{ 35Q30, 35B41, 35K58, 76D05. }

\begin{document}
	
	\maketitle
	
	\thispagestyle{empty} 
	
\begin{abstract}
In this work we study a dissipative one dimensional scalar parabolic problem with non-local nonlinear diffusion with delay. 
We consider the general situation in which the functions involved are only continuous and solutions may not be unique. We establish conditions for global existence and prove the existence of 
global attractors. All results are presented only in the autonomous since the non-autonomous case follows in the same way, including the  existence of pullback attractors. A particularly interesting feature
is that there is a semilinear problem (nonlocal in space and in time) from which one can obtain all solutions of the associated quasilinear problem and that for this semilinear problem the delay depends on the initial function making its study more involved.
\end{abstract}
	
\section{Introduction}
	
Reaction-diffusion equations with non-local terms have attracted great attention during the last twenty years. A few representative references are \cite{chipot1999asymptotic,michel2001asymptotic,chang2003nonlinear,chipot2003remarks,CLLM2020,caballero2021existence,CM2021,ACM2024}).  \\
In order to explain the problems we wish to consider, let us start with an example of the type of models we have in mind, that is, consider the following quasilinear initial value problem with delay:
\begin{equation}\label{problema}
\left\{ 
\begin{split}
&\dfrac{\partial w}{\partial \tau} -a(l(w)) \dfrac{\partial w^2}{\partial x^2}=\lambda f(w) + \gamma w(\tau -\rho) + h(\tau), \  \tau > 0, \ x \in \Omega, \\
&w(\tau,0)=w(\tau,1)=0,\\
&w(\tau)=\phi(\tau), \, \tau \in [- \rho,0],
\end{split}
\right.
\end{equation}
where $\Omega=(0,1)$, $\gamma, \lambda , \rho> 0$, $l$ a continuous operator from $H^{1}_{0}(\Omega)$ into $\mathbb{R^+}$, $a \in C(\mathbb{R})$  with $a(\mathbb{R})\subset [m,M]\subset (0,\infty)$, $f\in C(\mathbb{R},\mathbb{R})$, $h\in C(\mathbb{R},L^2(0,1))$ is bounded and $\phi \in C([-\rho,0],H^{1}_{0}(\Omega))$. \\
The local problem, i.e. $a\equiv 1$, without delay, has first shown to have very interesting properties in $1974$  in the seminal works of N. Chafee and E. Infante (see \cite{chafee-infante,chafee1974bifurcation}). Through the work of many authors, this has become the best understood infinite dimensional dynamical system  (see, for example, \cite{henry1981geometric,FuscoRocha1991} for the autonomous case and \cite{BrocheCarvalhoValero2019} for the non-autonomous case).
\\
For the non-local problem ($a$ non-constant), without delay, many interesting new features have been discovered relative to what was known for the local case making this problem a very interesting one from the point of view of dynamics
(see for example \cite{ACM2024,CM2021,CLLM2020,caballero2021existence}). Of course,  non-local problems are quite challenging from the analytical point of view making any new discovery even more interesting.\\
In \cite{CCJ2024} we have dealt with a prototype of non-autonomous scalar one dimensional parabolic problem with the non-local nonlinear diffusion being only continuous and through the semigroup theory.  The introduction of the time variable makes the problem quite challenging and interesting already. \\
In this paper we go one step further considering models similar to those treated in \cite{CCJ2024} with delay. As we will see next, this brings up a new and interesting feature to the problem, that is, the initial value problem has to be considered with a delay depending on the initial function.\\
To obtain the local existence and regularity  of mild solutions (continuous in time functions taking value in a suitable phase space and satisfying the variation of constants formula) for \eqref{problema} we will use the results of \cite{CCJ2024} but, to that end, we will need to deal with the very interesting new feature of problems with delay depending on the initial function  $\phi$. To obtain regularity, some additional assumption is needed on $f$, $a\circ l$ (as in \cite{CCJ2024}) but also on the initial function $\phi$.  \\
To ensure that solutions are globally defined and to be able to apply the method of steps we impose the structural condition

\bigskip

\noindent (\textbf{S}) Assume that there exist $C_0,C_1 \in \mathbb{R}$ such that 
$$
 uf(u) \leqslant -\nu C_0u^2 + |u| C_1
$$
for all $u\in \mathbb{R}$ and for both $\nu=\frac{m}{\lambda}$ and $\nu=\frac{M}{\lambda}$.

\bigskip

Finally, to obtain the existence of a global attractor we assume, the dissipativity condition 

\bigskip

\noindent (\textbf{D}) Assume that (\textbf{S}) holds for some $C_0$ such that the first eigenvalue $\omega$ of $A+\nu C_0I$ is positive and satisfies  $e^{-\omega \rho/m}+\frac{\gamma}{\omega m}<1$.

\bigskip

The nonlinear nonlocal diffusion $a(l(\cdot )):H^1_0(0,1)\to [m,M]$ makes the above problem a nonlocal (in space) quasilinear problem. 

\bigskip

Our aim will be to establish a general local existence and regularity result for solutions to \eqref{problema}, prove that if condition $\mathbf{(S)}$ is satisfied, solutions are globally defined and, if  condition $\mathbf{(D)}$ is satisfied, the multivalued semiflow associated to \eqref{problema} has a global attractor. We also use comparison results to obtain uniform bounds for the solutions in the global attractor. We could work with the nonlinearity as in \eqref{problema} (being time dependent) but all results would have identical proofs and therefore we have decided to consider only the case $h\equiv 0$. It will be clear from the proofs that adding a bounded continuous function $h:\mathbb{R} \to L^2(0,1)$,or even more general non-autonomous nonlinearities, will not change the proofs so we choose to omit it for the sake of simplicity in the notation.

\bigskip

Before we proceed, let us work a little more with the model \eqref{problema} in order to understand the interesting new feature it brings. Given a solution $w:[-\rho,\infty)\to H^1_0(0,1)$ of the problem \eqref{problema}, making $t\!:=\!\alpha^{-1}\!(\tau)\!={\displaystyle\int_0^\tau }a(l(w(r)))^{-1}dr$, $\tau\in[-\rho,\infty)$,  we have that the function $u$ defined by $u(t)=w(\tau)$ will be a solution of the  problem
\begin{equation}\label{problemau}
\left\{ \begin{array}{lcc}
u_t = u_{xx} + \dfrac{\lambda f(u) + \gamma u(\alpha(t)-\rho)}{a(l(u))}, \,\,\,\,\,\,\,\,\,\, \thinspace t >0, \,\, x \in \Omega,  \\
\\ u(t,0)=u(t,1)=0,\\
\\ u(t)=\phi(\alpha_0(t)), \ t \in [\alpha^{-1}(-\rho),0],
\end{array}
\right.
\end{equation}
where $\alpha^{-1}(-\rho)\!=\!{\displaystyle\int_0^{-\rho}} \!\!a(l(\phi(r)))^{-1}\! dr\!<\!0$ and, since $a(\mathbb{R})\!\subset \![m,M]\!\subset\! (0,\infty)$, $\frac{\rho}{M}\leqslant -\alpha^{-1}(-\rho)\leqslant \frac{\rho}{m}$.

We note that, if $\tau\geqslant 0$  
$$
\alpha(t)={\displaystyle\int_{0}^{t}}a(l(u(r))dr
$$ 
and  $\alpha_0:[\alpha^{-1}(-\rho),0]\to  [-\rho,0]$ is the only solution of the integral equation
$$
\alpha(s)={\displaystyle\int_0^{s}}a(l(\phi\circ\alpha(r)))dr, \ s\in (\alpha^{-1}(-\rho),0].
$$

On the other hand, given $\phi \in C(-\rho,0],H^{1}_{0}(\Omega))$ we define $\alpha_0$ and $\alpha^{-1}(\rho)$ as above. If $u:[\alpha^{-1}(-\rho),\infty)\to H^1_0(0,1)$ is a solution of the semilinear delay differential problem \eqref{problemau},
making 
$$
\tau =\alpha(t)= {\displaystyle\int_{0}^t}a(l(u(r)))dr,
$$ 
the function $w(\tau):=u(t)$ will be a solution of problem \eqref{problema}. 

\bigskip

Problem \eqref{problemau} is a non-local (in time and in space) non-autonomous semilinear problem.

\bigskip

Even though, for a fixed initial condition $\phi \in C([-\rho,0],H^{1}_{0}(\Omega))$, the solution of the problem \eqref{problemau} may not be unique, the delay is always the same, determined by the initial function $\phi$. In fact, it is a striking feature of this model that $\alpha_0$ and $\alpha^{-1}(\rho)$ are uniquely determined by $\phi$ only and that, to solve the \eqref{problema} with initial function $\phi$, corresponds to solve the nonlocal non-autonomous semilinear delay differential problem \eqref{problemau} with delay determined by $\phi$.

\bigskip

With this, we will prove the existence of solutions to problem \eqref{problemau} and
those will give us  solutions for \eqref{problema}. We will use the method of steps which consists of solving the problem iteratively, in intervals of time of length $\displaystyle\alpha^{-1}(\rho)=\int_{-\rho}^0 \!a(l(\phi(r)))^{-1}\! dr$.  In each step we will apply the results obtained in \cite{CCJ2024} for the semilinear non-local (in time and in space) non-autonomous problem \eqref{problemau}. As in \cite{CCJ2024}, we will proceed  as abstract as possible in order that the theory can be applied to other similar models with little effort.

\bigskip

Let us now recall the results of \cite{CCJ2024}. For a Banach space  $X$, let $C(X)$ ($L(X)$) denote the space of continuous (linear continuous) transformations from $X$ into itself.  For $u\in C([0,T],X^\alpha)$, let $u^t(\cdot) = u(\cdot)\big|_{[0,t]}$, $t\in [0,T]$. Consider the Cauchy problem
\begin{equation}\label{general}
\left\{ \begin{array}{lcc}
\dfrac{ du(t)}{dt}+  Au(t)= g(t,u^t(\cdot))  \,\,\,\,\,\,\,\,\,\, \thinspace t>0, \,\, \\
\\ u(0)=w_0 \in X^\alpha,
\end{array}
\right.
\end{equation}
where $-A$ has compact resolvent and is the infinitesimal generator of an exponentially decaying analytic semigroup $\{e^{-At}:t\geqslant 0\}$, $X^\alpha, \: \alpha \geqslant 0$, is the fractional power spaces associated to $A$ \cite{henry1981geometric,komatsu1966fractional}, and $g:[0,T] \times C([0,T], X^\alpha) \rightarrow X$ is a continuous function.\\
With this preliminaries,  we introduce the definitions of strong and mild solution for \eqref{general}.

\begin{defin}\cite{pazy}
A function $u:[0,T) \rightarrow X$ is a strong solution of \eqref{general} on $[0,T),$ if $u$ is continuous on $[0,T)$ and continuously differentiable on $(0,T), \: u(t) \in D(A)$ for $0<t<T,~u(0)=w_0$  and \eqref{general} is satisfied on $[0,T).$

\end{defin}

\begin{defin}\cite{pazy}
Let $A$ be the infinitesimal generator of an analytic semigroup $S(t)$, $w_0 \in X^\alpha $ and $g :[0,T) \times  C([0,T], X^\alpha) \rightarrow X$.
A function $ u \in C([0,T]; X^\alpha) $ such that
\begin{equation}\label{solufraca}
u(t)=S(t)w_0 + \int_0^t S(t-s)g(s,u^s(\cdot)) ds, \qquad 0 \leqslant t \leqslant T,
\end{equation}
is called a mild solution of the initial value problem \eqref{general} on $[0,T].$
\end{defin}

We summarize the results of \cite{CCJ2024} next.

\begin{theorem}[\cite{CCJ2024}]\label{mildsolution}
Let $X$ be a Banach space, $A$ a sectorial operator such that $(\lambda + A)^{-1}$ is compact for all $\lambda \in \rho (-A)$, and let $\{S(t): \,t\geqslant 0\}$ be the analytic semigroup generated by $-A$. If $g:[0,T]\times C([0,T],X^\alpha) \rightarrow X, \, 0\leqslant \alpha \leqslant 1$ is a continuous map,
then for each  $w_0\in X^\alpha$ there exists a $T_1=T_1(w_0)\in (0,T]$ such that the initial value problem \eqref{general} has a mild solution $u \in C([0,T_1];X^\alpha)$. Furthermore, $T_1$  may be chosen uniformly for $w_0$ in bounded subsets of $X^\alpha$.

In addition, if $g: [0,T] \times   C([0,T], X^{\alpha}) \rightarrow X, \, 0\leqslant \alpha \leqslant 1$, is such that, given $u \in C([0,T], X^{\alpha}) $ 

\begin{equation}\label{hipsobref}
\|g(t,u^t(\cdot))-g(s,u^s(\cdot))\|_X \leqslant w(\|u(t)-u(s)\|_{\alpha})+w(|t-s|^\beta),\,\,0 <\beta<1-\alpha,
\end{equation}
where $w:[0,\infty) \rightarrow [0,\infty)$ is an increasing continuous function such that $w(0)=0$ and 
\begin{equation}\label{hipsobrew}
\displaystyle \int_{0}^{t}u^{-1}w(u^\beta)du < \infty .
\end{equation}

A continuous function $u:[0,T_1] \rightarrow X^{\alpha}$  satisfying
$$
u(t)=S(t)w_0 + \displaystyle \int_{0}^{t} S(t-s) g(s,u^s(\cdot))ds , \;\;\ \;0 \leqslant t \leqslant T_1,
$$
is a strong solution of \eqref{general}.
\end{theorem}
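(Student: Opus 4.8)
The plan is to handle the two assertions separately. For the existence of a mild solution I would realize it as a fixed point of
\begin{equation*}
(\mathcal{T}u)(t)=S(t)w_0+\int_0^t S(t-s)\,g(s,u^s(\cdot))\,ds,\qquad t\in[0,T_1],
\end{equation*}
and apply Schauder's fixed point theorem on the closed, bounded, convex set
\begin{equation*}
B_R=\Big\{u\in C([0,T_1];X^\alpha):\,u(0)=w_0,\ \sup_{t\in[0,T_1]}\|u(t)-S(t)w_0\|_\alpha\leqslant R\Big\}.
\end{equation*}
Since $g$ is only continuous, with neither a global bound nor a Lipschitz estimate, the first task is to control $g$ along the relevant histories. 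I would achieve this by localization: the reference history curve $s\mapsto\big(S(\cdot)w_0\big)^s(\cdot)$ traces a compact set, so by continuity $g$ is bounded, say by $M$, on a neighbourhood of it, and shrinking $R$ and $T_1$ keeps every history $u^s(\cdot)$ with $u\in B_R$ inside that neighbourhood, whence $\|g(s,u^s(\cdot))\|_X\leqslant M$ uniformly.

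With this bound in hand I would verify the three hypotheses of Schauder's theorem. For the self-mapping property the standard analytic-semigroup estimate $\|A^\alpha S(t)\|_{L(X)}\leqslant C_\alpha t^{-\alpha}$ gives
\begin{equation*}
\|(\mathcal{T}u)(t)-S(t)w_0\|_\alpha\leqslant C_\alpha M\int_0^t (t-s)^{-\alpha}\,ds=\frac{C_\alpha M}{1-\alpha}\,t^{1-\alpha},
\end{equation*}
so that choosing $T_1$ small forces $\mathcal{T}(B_R)\subset B_R$ (which is where $\alpha<1$ enters). Continuity of $\mathcal{T}$ on $B_R$ follows from the continuity of $g$ together with dominated convergence on the compact neighbourhood above. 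For compactness I would invoke Arzel\`a--Ascoli: the same estimate with an exponent $\beta\in(\alpha,1)$ bounds $(\mathcal{T}u)(t)-S(t)w_0$ in $X^\beta$ uniformly in $u\in B_R$, and the compactness of the resolvent of $A$ makes the embedding $X^\beta\hookrightarrow X^\alpha$ compact, yielding pointwise precompactness, while equicontinuity in $t$ comes from the smoothing of the analytic semigroup applied to the uniformly bounded integrand. Schauder then produces a fixed point $u\in B_R$, which is the desired mild solution; and since $M$, $R$ and the constants depend on $w_0$ only through $\|w_0\|_\alpha$, the length $T_1$ may be taken uniform on bounded subsets of $X^\alpha$.

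\emph{(Regularity.)} For the second assertion I would run the classical H\"older bootstrap. Granting the mild solution $u$, the point is to prove that $s\mapsto g(s,u^s(\cdot))$ is locally H\"older continuous from $(0,T_1]$ into $X$; once this is known, the standard theory of the inhomogeneous abstract parabolic problem \cite{pazy,henry1981geometric} shows that the variation-of-constants integral lies in $D(A)$ for $t>0$ and is continuously differentiable, so that $u$ is in fact a strong solution. The H\"older continuity of $s\mapsto g(s,u^s(\cdot))$ I would extract from \eqref{hipsobref}, which bounds its increments by $w(\|u(t)-u(s)\|_\alpha)+w(|t-s|^\beta)$, substituted into the variation-of-constants representation of $u$: the singular kernel $(t-s)^{-\alpha}$ is integrated against this modulus, and condition \eqref{hipsobrew}, $\int_{0}u^{-1}w(u^\beta)\,du<\infty$, is precisely the Dini-type integrability that makes the resulting estimate for the modulus of continuity of $u$ close up.

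I expect the main obstacle to be twofold, both difficulties stemming from the memory (delay) structure, i.e. the dependence of $g$ on the whole history $u^t(\cdot)$ rather than on $u(t)$ alone. In the existence part the delicate point is that $g$ carries no a priori global bound and no Lipschitz control, so the localization must be carried out uniformly over histories, and the map $t\mapsto u^t(\cdot)$ must be treated with care both for the continuity of $\mathcal{T}$ and for the equicontinuity estimate. In the regularity part the genuine difficulty is the self-referential character of the modulus-of-continuity estimate: the regularity of $u$ and that of $s\mapsto g(s,u^s(\cdot))$ are intertwined, and one must check that \eqref{hipsobref}--\eqref{hipsobrew} do force the bootstrap to converge, the history dependence entering inside the modulus $w$ exactly as \eqref{hipsobref} is designed to accommodate.
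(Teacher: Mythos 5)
Your outline is correct and is essentially the standard argument for this kind of result: Schauder's fixed point theorem on a small ball around $S(\cdot)w_0$ (using the compact resolvent, the smoothing estimate $\|A^\alpha S(t)\|\leqslant C_\alpha t^{-\alpha}$ and Arzel\`a--Ascoli) for existence of mild solutions when $g$ is merely continuous, followed by the Henry--Pazy bootstrap in which the Dini condition \eqref{hipsobrew} replaces H\"older continuity of the forcing term to upgrade the mild solution to a strong one. Note, however, that this paper does not prove Theorem~\ref{mildsolution} at all --- it is quoted from the companion work \cite{CCJ2024} --- so there is no in-paper proof to compare against; your sketch matches the approach one expects (and that \cite{CCJ2024} follows) for that reference.
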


Next we will show how to apply the results in \cite{CCJ2024} to establish existence and regularity of solutions for \eqref{problemau}. Of course, since $f:\mathbb{R}\to \mathbb{R}$, $a:\mathbb{R}^+\to [m,M]\subset (0,\infty)$, $l:H^1_0(0,1)\to \mathbb{R}$ and $\phi \in C([-\rho,0],H^1_0(0,1))$ we have that, for $T\in [0,\alpha^{-1}(\rho))$,
$$
g(t,u^t(\cdot))(x)= \dfrac{\lambda f(u(t)(x)) + \gamma \phi(\alpha(t)-\rho)}{a(l(u^t(t)))}
$$ 
is a continuous map from $[0,T]\times C([0,T],H^1_0(0,1)$ into $X=L^2(0,1)$. The only point that has to be analyzed more carefully is the continuity of $[0,\alpha^{-1}(\rho)]\ni t \mapsto \phi(\alpha(t)-\rho)\in L^2(0,1)$, but that follows from the continuity of $\phi$ and from the fact that
$$
|\alpha(t)-\alpha(t')|=|\int_{t'}^t a(l(u(r)))dr|\stackrel{t\to t'}{\longrightarrow} 0.
$$

It follows that the following theorem holds.
 
\begin{theorem}\label{mild_problemau}
Let $X=L^2(0,1)$, $D(A)=H^2(0,1)\cap H^1_0(0,1)$ and $A:D(A)\subset X\to X$ be the operator defined by $Au=u_{xx}$. Then $-A$ is positive and self-adjoint (hence sectorial), with fractional power spaces $X^\alpha:=D({-A}^\alpha)$ with the graph norm, $\alpha\geqslant 0$, $X^\frac12=H^1_0(0,1)$,  $(\lambda + A)^{-1}$ has compact resolvent, $\lambda \in \rho (-A)$ and the semigroup $\{S(t): \,t\geqslant 0\}$ generated by $A$ is a compact and exponentially decaying analytic semigroup. If $f: \mathbb{R} \to \mathbb{R}$ is a continuous function satisfying $\mathbf{(S)}$, given $\phi\in C([-\rho,0],H^1_0(0,1))$ the initial value problem \eqref{problemau} has a mild solution $u \in C([0,\infty);H^1_0(0,1))$, furthermore  $u \in C([0,\infty);X^\alpha)$, for all $\alpha\in (0,1)$. 
\end{theorem}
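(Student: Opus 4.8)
The plan is to prove Theorem~\ref{mild_problemau} in two stages: first establishing the stated structural properties of the operator $A$, then using these together with Theorem~\ref{mildsolution} and the structural condition $\mathbf{(S)}$ to obtain a globally defined mild solution of~\eqref{problemau} via the method of steps. The operator-theoretic claims are classical: for $Au=-u_{xx}$ on $D(A)=H^2(0,1)\cap H^1_0(0,1)$ one has the explicit orthonormal eigenbasis $\{\sqrt{2}\sin(n\pi x)\}_{n\geqslant 1}$ with eigenvalues $\lambda_n=n^2\pi^2$, so $A$ is positive and self-adjoint, hence sectorial; the compactness of the resolvent follows from the compact embedding $H^2\cap H^1_0\hookrightarrow L^2$ (Rellich--Kondrachov), and the spectral gap $\lambda_1=\pi^2>0$ gives the exponential decay $\|S(t)\|\leqslant e^{-\pi^2 t}$. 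The identification $X^{1/2}=H^1_0(0,1)$ is standard from the characterization of fractional powers via the eigenexpansion. I would state these and cite~\cite{henry1981geometric,pazy}, since they are not the substance of the theorem.

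The heart of the argument is the global existence. First I would set $\alpha=\tfrac12$ so that $X^\alpha=H^1_0(0,1)$, and work on the first time interval $[0,\alpha^{-1}(\rho)]$. On this interval the delayed argument $\alpha(t)-\rho$ lies in $[-\rho,0]$, so the delay term $\gamma\phi(\alpha(t)-\rho)$ is a known continuous $L^2$-valued function (continuity already verified in the excerpt), and the nonlinearity
$$
g(t,u^t(\cdot))=\frac{\lambda f(u(t))+\gamma\phi(\alpha(t)-\rho)}{a(l(u(t)))}
$$
is continuous from $[0,\alpha^{-1}(\rho)]\times C([0,\alpha^{-1}(\rho)],H^1_0)$ into $X$. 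Theorem~\ref{mildsolution} then yields a local mild solution $u\in C([0,T_1];H^1_0)$ with $T_1$ uniform on bounded sets. The smoothing property $S(t):X\to X^\beta$ for $t>0$ upgrades $u$ to $C((0,T_1];X^\beta)$ for any $\beta\in(0,1)$, giving the higher regularity claimed.

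The main obstacle, and the place where $\mathbf{(S)}$ enters, is the passage from local to global existence: one must rule out blow-up of $\|u(t)\|_{H^1_0}$ in finite time so that the uniform $T_1$ can be iterated indefinitely. The plan is to derive an a priori bound by testing the equation with $-u_{xx}$ (equivalently $Au$) in $L^2$ and estimating
$$
\tfrac12\frac{d}{dt}\|u\|_{X^{1/2}}^2+\|u_{xx}\|_{L^2}^2=\int_0^1\frac{\lambda f(u)+\gamma\phi(\alpha(t)-\rho)}{a(l(u))}(-u_{xx})\,dx,
$$
where $a(l(u))\in[m,M]$ bounds the denominator from both sides. Condition $\mathbf{(S)}$, written for $\nu=m/\lambda$ and $\nu=M/\lambda$, converts the sign-indefinite term $uf(u)$ into a controllable dissipative contribution $-\nu C_0 u^2+C_1|u|$, which after integration by parts and Young's inequality absorbs the nonlinear term into the $\|u_{xx}\|^2$ dissipation; the delay term $\gamma\phi(\alpha(t)-\rho)$ contributes only a bounded forcing since $\phi$ is fixed and continuous, hence bounded on $[-\rho,0]$. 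This produces a differential inequality $\tfrac{d}{dt}\|u\|_{X^{1/2}}^2\leqslant -c\|u\|_{X^{1/2}}^2+C$ with $c,C$ independent of the interval, giving a bound on $\|u(t)\|_{H^1_0}$ that is uniform in $t$. Since $T_1$ depends only on such bounds, the solution extends across each step of length $\alpha^{-1}(\rho)$, and concatenating these steps produces the global solution $u\in C([0,\infty);H^1_0(0,1))\cap C((0,\infty);X^\alpha)$. The only delicate bookkeeping is that after the first step the delay term $\gamma u(\alpha(t)-\rho)$ involves the already-constructed solution rather than $\phi$, so at step $k$ one feeds the solution from step $k-1$ as the (now $H^1_0$-valued, continuous) forcing history; the same a priori estimate applies verbatim because the uniform bound controls this history as well.
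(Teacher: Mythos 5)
Your handling of the operator-theoretic claims and of local existence on $[0,\alpha^{-1}(\rho)]$ (continuity of $t\mapsto\phi(\alpha(t)-\rho)$, application of Theorem~\ref{mildsolution}, smoothing into $X^\alpha$) matches the paper. The gap is in the passage from local to global existence. You propose to close an $H^1_0$ a priori bound by testing the equation with $-u_{xx}$ and invoking $\mathbf{(S)}$, and this step fails for two reasons. First, $\mathbf{(S)}$ is a one-sided condition on the product $uf(u)$; it can only be exploited by pairing the nonlinearity with $u$, not with $-u_{xx}$. The term $\int_0^1 f(u)(-u_{xx})\,dx$ cannot be integrated by parts into $\int_0^1 f'(u)u_x^2\,dx$ because $f$ is only continuous, and even for smooth $f$ condition $\mathbf{(S)}$ gives no control on $f'$ or on $|f(u)|$: for instance $f(u)=-u e^{u^2}$ satisfies $\mathbf{(S)}$ (and the monotonicity hypothesis used elsewhere in the paper), yet $\|f(u)\|_{L^2}$ grows like $e^{\|u\|_{L^\infty}^2}$ and cannot be absorbed into $\|u_{xx}\|_{L^2}^2$ by Young's inequality. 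Second, in the generality of the theorem the solutions are only mild solutions of an equation with merely continuous $f$ and $a\circ l$; they need not be strong solutions (that requires the extra hypotheses \eqref{modulusofcont}), so the identity $\tfrac12\frac{d}{dt}\|u\|_{X^{1/2}}^2+\|u_{xx}\|_{L^2}^2=\langle g,-u_{xx}\rangle$ is not justified for them.

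The paper closes this gap by a completely different, order-theoretic mechanism. On each interval $[\alpha^{-1}(i\rho),\alpha^{-1}((i+1)\rho)]$ the delay term is a known continuous forcing, and the comparison principle of \cite[Corollary 4.4]{CCJ2024} --- which is where the hypothesis that $s\mapsto\kappa s+f(s)$ be increasing on compact intervals enters --- sandwiches $u(t,\phi)$ between solutions $u^{\pm}(t,\pm K_i)$ of the autonomous problems \eqref{f-} and \eqref{f+}. Condition $\mathbf{(S)}$, imposed for both $\nu=m/\lambda$ and $\nu=M/\lambda$ precisely to accommodate the denominator $a(l(u))\in[m,M]$, is what makes those comparison solutions globally defined, and the resulting pointwise bounds give the $L^\infty$ (hence, via the variation-of-constants formula, $H^1_0$) a priori estimate needed to iterate the uniform local existence time with a new constant $K_i$ at each step. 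If you wish to rescue an energy-type argument you would at minimum have to test with $u$ (where $\mathbf{(S)}$ does act), justify the computation for mild solutions by approximation, and then bootstrap the $L^2$ bound to $H^1_0$ through the smoothing of $S(t)$; as written, the central a priori estimate is not established.
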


For regularity, we assume also that $l:H^1_0(0,1)\to \mathbb{R}$ is continuous  and that $f:\mathbb{R}\to \mathbb{R}$, $a:\mathbb{R}^+\to [m,M]$, $l:H^1_0(0,1)\to \mathbb{R}$ and $\phi:(-\rho,0]\to H^1_0(0,1)$ satisfy

\begin{equation}\label{modulusofcont}
\begin{split}
&| f(s)-f(s')| \leqslant w(|s-s'|) \\
&| a(l(u))-a(l(v))| \leqslant w(\|u-v\|_{H^1_0(0,1)}) ,
\\
&|h(s)-h(s')| \leqslant w(|s-s'|^\beta),\\
&\|\phi(t)-\phi(t')\|_{X} \leqslant w(|t-t'|^\beta),
\end{split}
\end{equation}
where $w:[0,\infty) \rightarrow [0,\infty)$ is a continuous increasing function, $w(0)=0$ and
$$
 \displaystyle \int_{0}^{1}u^{-1}w(u^{\beta})du < \infty,
\hbox{ for some }\beta\in (0,1-\alpha).
$$ 
We only need to pay attention to the function $[0,\alpha^{-1}(\rho)]\ni t \mapsto \phi(\alpha(t)-\rho)\in L^2(0,1)$ and check that, under the above conditions, this function satisfies
the conditions of Theorem \ref{mildsolution} for the existence of a mild solution.

That follows from
\begin{equation*}
\begin{split}
\|\phi(\alpha(t)-\rho)-\phi(\alpha(t')-\rho)\|_{X}&\leqslant
w(|\alpha(t)-\alpha(t')|)\\
&=w(|\int_{t'}^t a(l(u(r)))dr|)\leqslant w(M|t-t'|).
\end{split}
\end{equation*}

\begin{theorem}\label{T_regularity}
Under the assumptions of Theorem 	\ref{mild_problemau} and that \eqref{modulusofcont} is satisfied, the initial value problem \eqref{problema} has a strong solution $w \in C([0,\infty);H^2(0,1)\cap H^1_0(0,1))$. 
\end{theorem}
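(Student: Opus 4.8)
The plan is to upgrade the mild solution $u$ of the semilinear problem \eqref{problemau} to a strong solution and then transport the gained regularity back to \eqref{problema} through the time change $\tau=\alpha(t)$. By Theorem \ref{mild_problemau} a mild solution $u$ exists with $u\in C([0,\infty);X^\alpha)$ for every $\alpha\in(0,1)$, so I fix once and for all some $\alpha\in[\tfrac12,1)$; in one space dimension this provides the embeddings $X^\alpha\hookrightarrow X^{1/2}=H^1_0(0,1)\hookrightarrow C([0,1])$, which will be the workhorse of the estimates. The argument is run on a single step of the method of steps, i.e. on an interval of length $\alpha^{-1}(\rho)$, and then iterated: on the first interval the delay term in $g$ is the prescribed history $\phi$, whereas on each subsequent interval it is the solution produced at the previous step, which is already regular, so the verification to be carried out is the same every time.

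The decisive point is to check that, evaluated along $u$, the nonlinearity
\[
g(t,u^t(\cdot))=\frac{\lambda f(u(t))+\gamma\,\phi(\alpha(t)-\rho)}{a(l(u(t)))}
\]
obeys the H\"older-type bound \eqref{hipsobref} with a modulus satisfying the Dini condition \eqref{hipsobrew}. Writing $N(t)=\lambda f(u(t))+\gamma\,\phi(\alpha(t)-\rho)\in X$ and $D(t)=a(l(u(t)))\in[m,M]$, I split
\[
g(t,u^t)-g(s,u^s)=\frac{N(t)-N(s)}{D(t)}+N(s)\Big(\frac{1}{D(t)}-\frac{1}{D(s)}\Big),
\]
and estimate each piece. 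For the numerator, the modulus assumption on $f$ in \eqref{modulusofcont}, the fact that $|\Omega|=1$ and the monotonicity of $w$ give $\|f(u(t))-f(u(s))\|_{X}\leqslant w(\|u(t)-u(s)\|_{\infty})\leqslant w(c\|u(t)-u(s)\|_{\alpha})$ through the embedding above, while the delay contribution $\|\phi(\alpha(t)-\rho)-\phi(\alpha(s)-\rho)\|_X\leqslant w(c|t-s|^\beta)$ is exactly the computation displayed before the statement, using that $\alpha$ is Lipschitz with constant $M$. For the reciprocal of the denominator, since $a\geqslant m>0$ we have $|D(t)^{-1}-D(s)^{-1}|\leqslant m^{-2}w(\|u(t)-u(s)\|_{H^1_0})\leqslant m^{-2}w(c\|u(t)-u(s)\|_{\alpha})$, and $\|N(s)\|_X$ is bounded on the compact interval under consideration (as $u$ stays in a bounded subset of $C([0,1])$ and $f$, $\phi$ are continuous). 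Collecting the three contributions yields \eqref{hipsobref} with $w$ replaced by a fixed multiple of $w(c\,\cdot)$; a change of variables in the integral shows that such a rescaling preserves \eqref{hipsobrew}, so the hypotheses of the second part of Theorem \ref{mildsolution} are met.

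Theorem \ref{mildsolution} then asserts that $u$ is a strong solution of \eqref{problemau}: $u(t)\in D(A)=H^2(0,1)\cap H^1_0(0,1)$ and $u$ is continuously differentiable for $t>0$. It remains to transport this back to \eqref{problema}. Along the strong solution the map $\tau=\alpha(t)=\int_0^t a(l(u(r)))\,dr$ is a strictly increasing $C^1$ bijection with $C^1$ inverse (because $r\mapsto a(l(u(r)))$ is continuous and valued in $[m,M]\subset(0,\infty)$), hence $w:=u\circ\alpha^{-1}$ inherits continuity into $H^2(0,1)\cap H^1_0(0,1)$ for positive times and differentiability, and by the chain rule together with the correspondence between \eqref{problema} and \eqref{problemau} established in the introduction it solves \eqref{problema} in the strong sense. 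Iterating over the finitely many steps needed to cover any compact time interval yields $w\in C([0,\infty);H^2(0,1)\cap H^1_0(0,1))$, the $H^2$-regularity for $t>0$ coming from the parabolic smoothing of the analytic semigroup.

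The main obstacle is the verification of \eqref{hipsobref} in the second paragraph, and within it the passage from the pointwise moduli of continuity of $f$ and $a\circ l$ to estimates in $X=L^2(0,1)$ controlled by $\|u(t)-u(s)\|_{\alpha}$: this is precisely where the one-dimensional embedding $X^\alpha\hookrightarrow C([0,1])$ is indispensable, and one must be careful that the constants generated along the way do not destroy the Dini integrability \eqref{hipsobrew} that Theorem \ref{mildsolution} requires.
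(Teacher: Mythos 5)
Your proposal is correct and follows essentially the same route as the paper: the method of steps reducing each interval of length $\alpha^{-1}(\rho)$ to a non-delayed nonlocal semilinear problem, the same splitting of $g(t,u^t)-g(s,u^s)$ into a numerator difference plus a reciprocal-of-denominator difference, the Lipschitz bound $|\alpha(t)-\alpha(s)|\leqslant M|t-s|$ to control the delay term, and the appeal to Theorem \ref{mildsolution} followed by the time change back to \eqref{problema}. The only difference is that you make explicit the embedding $X^\alpha\hookrightarrow C([0,1])$ and the stability of the Dini condition under rescaling of $w$, details the paper leaves implicit.
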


\section{Existence of Solutions}
\begin{proof}[Proof of Theorem \ref{T_regularity}]
We are going to use the method of steps. Let us review the reasoning and the needed details to apply Theorem \ref{mildsolution}.
We know that $w(\tau)=\phi(\tau)$ for $\tau \in (-\rho,0]$. Then, for $\tau \in (0,\rho]$ , $\tau-\rho\in (-\rho,0]$ and then we have $w(\tau-\rho)=\phi(\tau  -\rho)$, so that equation \eqref{problema} becomes the following non-autonomous quasilinear non-local scalar one-dimensional parabolic partial differential equation		
\begin{equation}\label{Eq01}
\left\{ 
\begin{split}
&\dfrac{\partial w}{\partial \tau} =a(l(w)) \dfrac{\partial w^2}{\partial x^2}+\lambda f(w) + \gamma \phi(\tau - \rho),\  \tau>0\ ,x \in \Omega,  \\ 
&w(\tau,0)=w(\tau,1)=0,\\
&w(0)=\phi(0).
\end{split}
\right.
\end{equation}
One can perform a change in the time scale in order to obtain the semilinear problem 
\begin{equation}\label{problemapaso1}
\left\{ 
\begin{split}
&u_t = u_{xx} + \dfrac{\lambda f(u) + \gamma \phi(\alpha(t)- \rho)}{a(l(u))}, \  t >0, \,\, x \in \Omega,  \\
& u(t,0)=u(t,1)=0,\\
& u(0)=\phi(0),
\end{split}
\right.
\end{equation}
where $\tau = {\displaystyle\int_{0}^t} a(l((u(r)))dr=:\alpha(t)$.

It is clear, from the discussion in the Introduction, that this initial value problem has at least a mild solution that we will call $g_1$ (see \cite{CCJ2024}). Under Assumption $\mathbf{(S)}$, since $[0,\alpha^{-1}(\rho)]\ni t\mapsto \phi(\alpha(t)- \rho)\in H^1_0(0,1)$ is continuous, this solution exists in the interval $[0,\alpha^{-1}(\rho)]$. Thus, we just need to check regularity. 

To that end, set $f_1:[0, \alpha^{-1}(\rho)] \times C([0,\alpha^{-1}(\rho)],X^\alpha) \rightarrow X$ defined by 
$$
f_1(t,u^t(\cdot))=\dfrac{\lambda f(u(t))+\gamma \phi (\alpha(t)-\rho)}{a(l(u(t)))}, \,\, \alpha(t)= \int_{0}^t a(l(u(\theta)))d\theta.
$$ 
Then,  for $g_1(t),~g_1(s) \in V,$ for $V$ a neighborhood of $\phi(0),$ we have that
\begin{equation}
    \begin{split}  \|f_1(t,&g_1^t(\cdot))  \!-\! f_1(s,g_1^t(\cdot))\| \\
    & \leqslant \left \|\dfrac{\lambda f(g_1(t))\!+\!\gamma \phi (\alpha(t)\!-\!\rho)}{a(l(g_1(t)))}\! -\! \dfrac{\lambda f(g_1(s))\!+\!\gamma \phi (\alpha(s)\!-\!\rho)}{a(l(g_1(s)))} \pm\! \dfrac{\lambda f(g_1(s))\!+\!\gamma \phi (\alpha(s)\!-\!\rho)}{a(l(g_1(t)))}\right\| \\
       &  \leqslant m^{-1}\left\| \lambda(f(g_1(t))-f(g_1(s)))\right \| +m^{-1}\left\| 
      \gamma(\phi(\alpha(t)-\rho)-\phi(\alpha(s)-\rho)) \right \| \\
       & +  \| (\lambda f(g_1(s)) + \gamma \phi(\alpha(s)-\rho))\| \left |\dfrac{a(l(g_1(s)))-a(l(g_1(t)))}{a(l(g_1(t)))a(l(g_1(s)))}\right |.
    \end{split}
\end{equation}

Since for $t>s$ we have 
\begin{equation}
\begin{split} |\alpha(t)-\alpha(s)| & 
= \left| \displaystyle \int_s^t a(l(g_1(\theta)))d\theta  \right | \\
& \leqslant M|t-s|. 
\end{split}
\end{equation}
It follows that 
\begin{equation}
\|f_1(t,g_1^t(\cdot))  - f_1(s,g_1^s(\cdot))\|  \leqslant K_1 w(\|g_1(t) - g_1(s) \|_\alpha) +  K_2 w(c |t-s|^\beta),
\end{equation}
for some positive numbers $K_1,K_2$ and $c$, and since
$$
\displaystyle \int_{0}^{t}u^{-1}w(u^{\beta})du < \infty,\,\,for\,\, 0< \beta < 1- \alpha,
$$
we can apply Theorem \ref{mildsolution} and conclude
that $g_1$ is a strong solution to equation \eqref{problemapaso1} in the interval $[0,\alpha^{-1}(\rho)]$. Also $w(\tau)=g_1(t)$ is a solution of \eqref{Eq01} in the interval $[0,\rho]$.

Now, for $\tau \in [\rho, 2\rho], \, \tau -\rho \in [0,\rho]$ and we have that   $w(\tau)=u(\alpha^{-1}(\tau))= g_1(\tau)$, so  equation \eqref{problema} becomes
		\begin{equation}\label{problema3}
			\left\{ \begin{array}{lcc}
				\dfrac{\partial w}{\partial \tau} =a(l(w)) \dfrac{\partial w^2}{\partial x^2}+\lambda f(w) + \gamma g_1(\tau-\rho), \,\,\,\,\,\,\,\,\,\, \thinspace \tau > \rho,  \\
    \\ w(\tau,0)=w(\tau,1)=0,\\
				\\ w(\rho)=g_1( \rho).
			\end{array}
			\right.
		\end{equation} 
Again, by making a change in the time scale we have  the initial value problem
\begin{equation}\label{problemapaso2}
			\left\{ \begin{array}{lcc}
				\dfrac{\partial u}{\partial t} =\dfrac{\partial u^2}{\partial x^2}+\dfrac{\lambda f(u) + \gamma g_1(\alpha(t)-\rho)}{a(l(u))}, \ t \in (\alpha^{-1}(\rho),\alpha^{-1}(2\rho)],  \\
    \\ u(t,0)=u(t,1)=0,\\
				\\ u(\alpha^{-1}(\rho))=g_1( \alpha^{-1}(\rho)).
			\end{array}
			\right.
		\end{equation} 
Since $g_1 \in C((0,\alpha^{-1}(\rho)],X^{\alpha})$, we have that the application $$
f_1(t,u(t),u(\cdot))=\dfrac{\lambda f(u(t))+\gamma g_1((\alpha(t)-\rho))}{a(l(u(t))}
$$ 
is continuous as long as $\alpha(t)\in [\rho,2\rho]$, which means that $t \in [\alpha^{-1}(\rho), \alpha^{-1}(2\rho)]$. Thus, again we just need to check the regularity of the solution.
 Hence, we need to see that
$$
\|f_1(t,u^t(\cdot))-f_1(s,u^s(\cdot)) \| \leqslant w(\|u(t)-u(s)\|_{\alpha}) + w(|t-s|^{\beta}),
$$ 
but that follows exactly as before. To proceed, we make $g_2(\tau)=u(\alpha^{-1}(\tau))$ for $\tau \in [\rho,2\rho]$.

Continuing in this way, if $g_{n-1}(\tau)=u(\alpha^{-1}(\tau))$ for $\tau \in [(n-2)\rho,(n-1)\rho]$ we have, for $n \geqslant 3$,
\begin{equation}\label{problemanesimo}
\left\{ \begin{split}
&\dfrac{\partial w}{\partial \tau} -a(l(w)) \dfrac{\partial w^2}{\partial x^2}=\lambda f(w) + \gamma g_{n-1}(\tau-\rho),\  \tau\in ((n-1)\rho, n\rho],  \\
&w(\tau,0)=w(\tau,1)=0,\\
& w((n-1)\rho)=g_{n-1}( (n-1)\rho).
\end{split}
\right.
\end{equation} 
has a mild solution $g_n:[(n-1)\rho, n\rho]\to H^1_0(0,1)$.
Now if $w\!:\![-\rho,\infty) \to H^1_0(0,1)$ is given by
\begin{equation}\label{solution}
w(\tau)=\left\{ \begin{split}
&\phi(\tau) , \,\, \tau\in [-\rho ,0] ;\\
& g_n(\tau), \,\, \tau \in ( (n-1)\rho,n \rho],~n \geqslant 1,
\end{split}
\right.
\end{equation}
it is a strong solution of \eqref{problema}.

Using the variation of constants formula we have 
$$
g_1(\tau)=u(t)= S(t)\phi(0) + \displaystyle \int_{0}^{t}S(t-s)\frac{\lambda f(g_1(\alpha(s)))+\gamma\phi(\alpha(s)-\rho)}{a(l(g_1(\alpha(s))))}ds, \ 0\leqslant t\leqslant \alpha^{-1}(\rho)
$$ 
$$
g_2(\tau)=u(t)=S(t -  \alpha^{-1}(\rho))g_1( \rho) + \displaystyle \int_{\alpha^{-1}(\rho)}^{t}S(t-s)\frac{(\lambda f(g_2(\alpha(s)))+\gamma g_{1}(\alpha(s)-\rho))}{a(l(g_2(\alpha(s))))}ds, 
$$ 
for $t\in [\alpha^{-1}(\rho), \alpha^{-1}(2\rho)]$.
We know that 
$$
g_1( \rho)= S( \alpha^{-1}(\rho))\phi(0) + \displaystyle \int_{0}^{ \alpha^{-1}(\rho)}S(  \alpha^{-1}(\rho) -s)\frac{\lambda f(g_1(\alpha(s)))+\gamma\phi(\alpha(s)-\rho)}{a(l(g_1(\alpha(s))))}ds,
$$
then, we obtain that 
$$
S(t-\alpha^{-1}(\rho)) g_1(\rho)=S(t)\phi(0) + \displaystyle \int_{0}^{\alpha^{-1}(\rho)}S(t -s)\frac{\lambda f(g_1(\alpha(s)))+\gamma\phi(\alpha(s)-\rho)}{a(l(g_1(\alpha(s))))}ds
$$
and
\begin{equation*}
\begin{split}
 g_2(\tau)=u(t)= S(t)\phi(0) & + \displaystyle \int_{0}^{\alpha^{-1}(\rho)}S(t -s)\frac{\lambda f(g_1(\alpha(s)))+\gamma\phi(\alpha(s)-\rho)}{a(l(g_1(\alpha(s))))}ds \\& 
 + \displaystyle \int_{\alpha^{-1}(\rho)}^{t}S(t -s) \frac{(\lambda f(g_2(\alpha(s)))+\gamma g_1(\alpha(s)-\rho))}{a(l(g_2(\alpha(s))))}ds. 
\end{split}
\end{equation*}
Similarly, for $n>1$ we obtain that

\begin{equation*}
\begin{split}
    g_n(\tau)& =u(t) =  S(t)\phi(0) + \displaystyle \sum_{m=1}^{n-1}\displaystyle \int_{  \alpha^{-1}((m-1)\rho)}^{ \alpha^{-1}( m\rho)}S(t -s)\frac{(\lambda f(g_m(\alpha(s)))+\gamma g_{m-1} (\alpha(s) -\rho))}{a(l(g_m(\alpha(s))))}ds \\ & 
 + \displaystyle \int_{ \alpha^{-1}((n-1)\rho)}^{t}S(t -s) \frac{(\lambda f(g_n(\alpha(s)))+\gamma g_{n-1}(\alpha(s)-\rho))}{a(l(g_n(\alpha(s))))}ds.
\end{split}
\end{equation*}
\end{proof}

 \section{Comparison results and global existence} 
Throughout this section, $\phi$ is the initial function  of problem \eqref{problema}, $K>0$ is such that $-K\leqslant \phi(t) \leqslant K$ and $``\leqslant"$ is a partial ordering in $H^{1}_{0}(\Omega)$, that is:
 $$
u \leqslant v ~ \text{in} ~ H^{1}_{0}(\Omega) \Leftrightarrow u(x) \leqslant v(x) ~ \text{a.e. ~ for} ~ x ~ \text{in} ~ \Omega.
$$
We consider the initial value problems :

\vspace{-20pt}

\begin{multicols}{2}
\begin{equation}\label{f-}
\begin{split}
	&\dfrac{\partial u}{\partial t}\!=\! Au \!+\!\dfrac{\lambda f(u) \!-\! K}{M},\, t\!>\!0, x\! \in\! \Omega, \\
              &u(0)=-K,\\
	& u|_{\partial\Omega}=0,
\end{split}
\end{equation}

\begin{equation}\label{f+}
\begin{split}
	&\dfrac{\partial u}{\partial t}\!=\! Au \!+\! \dfrac{\lambda f(u) \!+\! K}{m},\, t\!>\!0, x\! \in\! \Omega,  \\
              &u(0)=K,\\
              & u|_{\partial\Omega}=0,
\end{split}
\end{equation}
\end{multicols}
 and we write 
 \begin{equation*}
    \begin{array}{rl}
&  f^{-}(u)  =\dfrac{\lambda f(u) - \gamma K}{M},\quad  uf^{-}(u)\leqslant \nu C_0u^2 + \left(\frac{\gamma K}{M}+ C_1\right)|u|,\\\\
&  f^{+}(u)= \dfrac{\lambda f(u) +\gamma K }{m} ,\quad  uf^{+}(u)\leqslant \nu C_0u^2 + \left(\frac{\gamma K}{m}+ C_1\right)|u|,\\\\
& g(t,u^t(\cdot))=\dfrac{\lambda f(u(t)) +\gamma u( \int_{0}^t a(l(u(\theta)))-\rho )d\theta }{a(l(u(t)))},
         
\end{array}
\end{equation*}
\noindent where $A:D(A) \rightarrow L^2(\Omega)$ is the linear operator defined in the following way, $D(A)= H^2(\Omega) \cap  H^{1}_{0}(\Omega) $  and $Au=u_{xx}, ~ u \in D(A) $; $f:\mathbb{R}\to \mathbb{R}$ and $g:[0, T] \times  C([0,T], X^\alpha) \rightarrow X$ are continuous functions. For $u(\cdot) \in C([0, T], X^\alpha), u^{t}(\cdot) = u(\cdot)|_{[0,t]}, u(t)=u^{t}(t)$. We shall prove that, under some structural condition on $f$ and assuming that for each $r>0$ there is a $\kappa=\kappa(r)$ such that $u\mapsto \kappa u + f(u) $ is an increasing function in $[-r,r]$, then for each $n\in \mathbb{N}$, there is a constant $K_n$ such that the solutions $u(t,\phi)$ of \eqref{problemau} are globally defined, and there are $u(t,K_n),~u(t,-K_n),$ solutions of \eqref{f+} and \eqref{f-} with $K$ replaced by $K_n$, such that $u(t,-K_n) \leqslant u(t,\phi)\leqslant u(t,K_n)$, $t\in [\alpha^{-1}((n-1)\rho),\alpha^{-1}(n\rho)]$. 

\bigskip

That is, thanks to the fact that the solutions of \eqref{f-} and \eqref{f+} are globally defined, we can guarantee that the solutions of \eqref{problemau} are defined in the interval $[0,\alpha^{-1}(\rho)]$ and there is a positive constant $K_1$ such that $-K_1\leqslant u(t,\phi) \leqslant K_1$, $0\leqslant t \leqslant \alpha^{-1}( \rho)$. We can therefore  iterate this procedure to obtain that the solutions of \eqref{problemau} are globally defined.

\bigskip

To prove results described above, we need to impose the structural condition (\textbf{S}) stated in the introduction to the non-linear forcing term.
This condition ensures that the solutions of \eqref{f+} and \eqref{f-} are global and consequently, using the procedure described above we obtain that the solutions of \eqref{problema} are global.

\bigskip

Observe that  condition (\textbf{D}) imposes a restriction on the constant $C_0$ that appears in condition $\mathbf{(S)}$. It was used in \cite{CCJ2024} to ensure the existence of pullback attractor. Here some special care needs to be taken due to the procedure described above with changing constants $K$ at intervals of length $\rho$. In fact, the following result holds

\begin{theorem}\label{corolariocomparacion}
 Assume that (\textbf{S}) holds for a continuous function $f:\mathbb{R}\to \mathbb{R}$  such that, for every $r>0$, there exists a constant $\kappa= \kappa(r) > 0 $ such that $s\mapsto \kappa s+  f(s)$ is increasing in $[-r,r]$. Let $\phi:[-\rho,0]\to H^1_0(0,1)$ and $K>0$ such that $-K\leqslant \phi(\tau)(x)=\phi(\alpha(t))(x)\leqslant K$ for all $\tau\in [-\rho,0]$ and $x\in [0,1]$. If $u(t,\phi)$ is a solution of 
 \begin{equation}\label{problemaaux}
\left\{ 
\begin{split}
&u_t = u_{xx} +  g(t,u^t(\cdot)), \  t >0, \,\, x \in \Omega,  \\
& u(t,0)=u(t,1)=0,\\
& u(r)=\phi(\alpha(r)), r\in [\alpha^{-1}(-\rho),0],
\end{split}
\right.
\end{equation}
there are 
$u(t,-K)$, $u(t,K)$ solutions of \eqref{f-} and \eqref{f+} such that $u(t,-K) \leqslant u(t,\phi)\leqslant u(t,K)$, for $t\in [0,\alpha^{-1}(\rho)]$.  As an immediate consequence of this, $u(\cdot,\phi)$ is defined for all $t\geqslant 0$.
\end{theorem}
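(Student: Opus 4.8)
The plan is to fix a solution $u(\cdot,\phi)$ of \eqref{problemaaux} and freeze the nonlocal data along it, turning the quasilinear nonlocal problem into a local non-autonomous semilinear one to which a classical parabolic comparison principle applies. Concretely, on the first step $t\in[0,\alpha^{-1}(\rho)]$ the delayed argument $\alpha(t)-\rho$ lies in $[-\rho,0]$, so the delay term reduces to the known function $\gamma\phi(\alpha(t)-\rho)$; setting $b(t):=a(l(u(t)))\in[m,M]$, the solution satisfies
\begin{equation*}
u_t=u_{xx}+G(t,x,u),\qquad G(t,x,s):=\frac{\lambda f(s)+\gamma\phi(\alpha(t)-\rho)(x)}{b(t)} .
\end{equation*}
First I would record the two-sided nonlinearity estimate $f^{-}(s)\leqslant G(t,x,s)\leqslant f^{+}(s)$, using $-K\leqslant\phi\leqslant K$ together with $m\leqslant b(t)\leqslant M$, and then invoke comparison: since $u(0,-K)=-K\leqslant\phi(0)\leqslant K=u(0,K)$ and all three problems carry the same homogeneous Dirichlet data, the ordering $u(t,-K)\leqslant u(t,\phi)\leqslant u(t,K)$ should propagate in time. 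To legitimise the comparison I would use the hypothesis that $s\mapsto \kappa s+f(s)$ is increasing on $[-r,r]$: adding $\kappa u$ to both sides renders each reaction term quasi-monotone, so the difference of a sub- and a supersolution obeys a scalar differential inequality with nonnegative initial datum and cannot change sign (a touching-point / maximum-principle argument applied to $u(t,K)-u(t,\phi)$ and to $u(t,\phi)-u(t,-K)$).

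Once the sandwich is available, the conclusion is quick. Under (\textbf{S}) the auxiliary problems \eqref{f+} and \eqref{f-} are dissipative Chafee--Infante-type equations, hence their solutions $u(t,\pm K)$ are globally defined and bounded; squeezing $u(\cdot,\phi)$ between them yields a uniform a priori bound on $[0,\alpha^{-1}(\rho)]$, which bootstraps from the pointwise bound to an $X^\alpha$ bound by parabolic smoothing. The blow-up alternative inherent in the local existence result (Theorem \ref{mildsolution}, where $T_1$ is uniform on bounded subsets of $X^\alpha$) then forces $u(\cdot,\phi)$ to exist on the whole interval. The final assertion that $u(\cdot,\phi)$ is defined for all $t\geqslant 0$ is obtained by the method of steps already set up in Section 2: on the $n$-th window one repeats the argument with the delay term $\gamma g_{n-1}$, now bounded by the constant $K_{n-1}$ produced on the previous window, yielding a new constant $K_n$; since each window has $\tau$-length $\rho$ (equivalently $t$-length at least $\rho/M$), finitely many of them cover any bounded time interval.

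The main obstacle is the nonlocal diffusion $a(l(u))$ sitting in the denominator of $G$. The bound $f^{-}\leqslant G\leqslant f^{+}$ is not valid for every $s$: when the numerator $\lambda f(s)+\gamma K$ is negative (which happens for large $s$ because $f$ is dissipative) dividing by the smaller constant $m$ over-estimates the decay and the inequality $G\leqslant f^{+}$ reverses, so the choice of denominators $m$ and $M$ is correct only on the range where $\lambda f(\cdot)\pm\gamma K$ has the expected sign. The point I expect to argue with care is therefore that the barriers confine themselves to exactly this range: condition (\textbf{S}), imposed for both $\nu=\frac{m}{\lambda}$ and $\nu=\frac{M}{\lambda}$ precisely so that both denominators are controlled, forces $u(t,K)$ and $u(t,-K)$ into a bounded, sign-definite region in which the denominator $m$ (respectively $M$) is the right one. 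It is the interplay between this invariant region, the quasi-monotone shift and the touching-point analysis that must be handled carefully, rather than the otherwise routine parabolic comparison.
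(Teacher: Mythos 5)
Your overall architecture coincides with the paper's: on the first window $[0,\alpha^{-1}(\rho)]$ the delayed term is the known function $\gamma\phi(\alpha(\cdot)-\rho)$, one compares $u(t,\phi)$ with the solutions of \eqref{f-} and \eqref{f+}, records $K_1=\sup_{t,x}|u(t,\phi)(x)|$ on that window, and iterates, with condition (\textbf{S}) keeping each new pair of barriers globally defined. The difference is that the paper does not reprove the comparison step at all --- it simply invokes \cite[Corollary 4.4]{CCJ2024} --- whereas you attempt to prove it directly by freezing $b(t)=a(l(u(t)))$ and sandwiching the frozen nonlinearity, $f^{-}(s)\leqslant G(t,x,s)\leqslant f^{+}(s)$, before running a touching-point argument.

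That sandwich is where the gap lies, and you have correctly located it but not closed it. As you observe, $G(t,x,s)\leqslant f^{+}(s)$ fails whenever $\lambda f(s)+\gamma K<0$: taking $\phi\equiv K$ and $b(t)=M$ gives $G=\frac{\lambda f(s)+\gamma K}{M}>\frac{\lambda f(s)+\gamma K}{m}=f^{+}(s)$, and the lower bound fails symmetrically. Your proposed repair --- that (\textbf{S}) confines the barriers to the region where the numerators have the favourable sign --- cannot work: (\textbf{S}) is a one-sided dissipativity bound, $uf(u)\leqslant-\nu C_0u^2+C_1|u|$, which forces $\lambda f(s)+\gamma K$ to be \emph{negative} for large $s>0$; so for a Chafee--Infante-type $f$ and $K$ large the upper barrier starts at $u(0,K)=K$ already inside the ``bad'' region, and nothing prevents a touching point from occurring there. (A spatially almost-constant configuration with $f(s)=-s$, $a\equiv M$, $\phi\equiv K$ shows that the candidate supersolution with denominator $m$ genuinely decays faster than $u(t,\phi)$ for small $t$, so the ordering itself, not merely your proof of it, breaks down for the frozen-coefficient argument as you set it up.) To close the argument along your lines you would need to replace $f^{+}$ by the correct envelope
$$\sup_{b\in[m,M]}\frac{\lambda f(s)+\gamma K}{b}=\frac{(\lambda f(s)+\gamma K)^{+}}{m}-\frac{(\lambda f(s)+\gamma K)^{-}}{M},$$
for which (\textbf{S}) assumed for both $\nu=m/\lambda$ and $\nu=M/\lambda$ still yields global, dissipative barriers --- or else quote the comparison result of \cite{CCJ2024} as the paper does. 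The remaining ingredients of your write-up --- the $\kappa$-shift to handle the lack of monotonicity of $f$, the blow-up alternative from the uniform local existence time, and the step-by-step iteration with updated constants $K_n$ --- are sound and match the paper.
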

\begin{proof}
We will use the iterative step method. For that, first we consider  $t \in (0, \alpha^{-1}(\rho)],$ then, $\alpha(t) - \rho \in (- \rho,0],$  and $w(\tau- \rho)= \phi(\alpha(t)- \rho)$, so that the equation \eqref{problema} becomes the initial value problem
\begin{equation}\label{paso1.1}
\left\{ 
\begin{split}
&u_t = u_{xx} + \dfrac{\lambda f(u) + \gamma \phi(\alpha(t)-\rho)}{a(l(u))}, \,\,\,\,\,\,\,\,\,\, \thinspace t >0, \,\, x \in \Omega,  \\
& u(t,0)=u(t,1)=0,\\
& u(0)=\phi(0).
\end{split}
\right.
\end{equation}

Using \cite[Corollary 4.4]{CCJ2024}, since to $-K\leqslant \phi(0) \leqslant K$, we obtain the existence of  $u^+(t,K),$ and $u^-(t,-K)$, solutions \eqref{f+} and  \eqref{f-}, defined in $[0,\alpha^{-1}(\rho)]$, such that $u^-(t,-K)  \leqslant  u(t,\phi) \leqslant u^+(t,K)$ for $t\in [0,\alpha^{-1}(\rho)]$. Now, taking $K_1= \sup_{t\in [0,\alpha^{-1}(\rho)], x\in [0,1]}|u(t,\phi)(x)|$, we may repeat this procedure to ensure that $u(t,\phi)$ is defined in $[\alpha^{-1}(\rho),\alpha^{-1}(2\rho)]$ and, by induction, for all $t\geqslant 0$. To that, in each step, we use comparison in the interval $[\alpha^{-1}(i\rho),\alpha^{-1}((i+1)\rho)]$ to ensure that $u(t,\phi)$ is defined up to $\alpha^{-1}((i+1)\rho)$, knowing that condition $\mathbf{(S)}$ ensures that solutions $u^\pm(t-\alpha^{-1}(i\rho),\pm K_i)$ are defined for all $t\geqslant \alpha^{-1}(i\rho)$. This proves the global existence.
\end{proof}

This reasoning takes care of the global existence but, differently from the results in \cite{CCJ2024} the comparison has to be used in each step and does not give us boundedness of $u(t,\phi)$ from the boundedness of one solution $u(t,\pm K)$. The fact that for each interval $[\alpha^{-1}(i\rho),\alpha^{-1}((i+1)\rho)]$ we must use a different $K_i$ could result on unboundedness of $u(t,\phi)$ even if $u(t,\pm K_i)$ is bounded,  for each $i\in \mathbb{N}$.

To ensure boundedness of solutions we will need to use condition $\mathbf{(D)}$ in a step by step procedure like before. This condition ensures that the semigroup $\{e^{-(A+\nu C_0I)t}:t\geqslant 0\}$ generated by $-A-\nu C_0I$ is exponentially decaying, that is,  $\|e^{-(A+\nu C_0I)t}\|_{\mathcal{L}(L^2)}\leqslant e^{-\omega t}$, $t\geqslant 0$.

Let us start by estimating $u^+(t,K)$ (the estimate for $u^-(t,- K)$ is analogous). We know it satisfies
$$
u^+(t,+ K)\leqslant e^{-(A+\nu C_0I)t}K + \int_0^t e^{-(A+\nu C_0I)(t-s)} C_1^* ds,
$$
where $C_1^*=\left(\frac{\gamma K}{M}+ C_1\right)$. Now,  for $t\in [0,\alpha^{-1}(\rho)]$,
\begin{equation*}
\begin{split}
\|e^{-(A+\nu C_0I)t}K + \int_0^t e^{-(A+\nu C_0I)(t-s)} C_1^* ds\|_{L^\infty} &\leqslant\|e^{-(A+\nu C_0I)t}K + \int_0^t e^{-(A+\nu C_0I)(t-s)} C_1^* ds\|_{H^1_0(0,1)}\\
&\leqslant  e^{-\omega t} K + \int_0^t e^{-\omega(t-s)}C_1^* ds\\
& \leqslant  e^{-\omega  \alpha^{-1}(\rho)} K + \frac{C_1^*}{\omega}. 
\end{split}
\end{equation*}
Note that, we are free to choose $K$ and that $e^{-\omega \rho/m}+\frac{\gamma}{\omega m}<1$. Hence, we choose $K$ large  in such a way that
$$
 e^{-\omega  \alpha^{-1}(\rho)} K + \frac{C_1^*}{\omega}\leqslant  e^{-\omega \frac{\rho}{M} } K + \frac{C_1^*}{\omega}  <K.
$$
This ensures that, the ball of radius $K$ in $L^\infty(0,1)$ is an absorbing set for the solutions of \eqref{f+}. It follows that there is a bounded set in $H^1_0(0,1)$ that strongly absorbs bonded sets (the absorption time depends only on the elapsed time and does not depend on the initial time).

\begin{corol}\label{limitacionsol} Assume that $l:H^1_0(0,1)\to \mathbb{R}$, $a$, $f$ and $h$ are continuous functions, and assume that $f$ is such that, for every $r>0$, there is a constant $\kappa= \kappa(r) > 0 $ such that $ \kappa I + f(\cdot) $ is increasing in $[-r,r]$.
    \begin{itemize}
\item[(i)] If (\textbf{S}) holds, then the solutions of  \eqref{problema} are globally defined. 
\item[(ii)]If (\textbf{D}) holds, there are constants $K_\infty$ and $K^\alpha$  such that, for any $R>0$ there exists $T_R>0$ such that, for any $\phi\in C([-\rho,0],H^1_0(0,1))$ with $\|\phi\|_{C([-\rho,0],H^1_0(0,1))}\leqslant R$,
	\begin{equation}\label{k2_infty}
	\sup_{t\geqslant T_R }\|u^+(t,\phi)\|_{L^{\infty}(\Omega)} \leqslant K_\infty,
	\end{equation}	
	\begin{equation}\label{k2_r}
		\sup_{t\geqslant T_R }\|u^+(t,\phi)\|_{H^{1}_{0} } \leqslant K^\alpha.
	\end{equation}
Therefore, \eqref{k2_infty} and \eqref{k2_r} also hold for the solutions of  \eqref{problema}.
\item[(iii)] If  (\textbf{D}) holds, with the choice of $K$ as in the reasoning before the corollary, we define $C_1^*=\left(\frac{\gamma K}{M}+ C_1\right)$.  If $\phi \in C([-\rho,0],H^{1}_{0}(\Omega))$ and $\theta$ is the solution of 
\begin{equation}\label{A+C_0}
\left\{ \begin{array}{lcc}
(A+\nu C_0)\theta=C_1^* , \,\,\,\,\,\,\,\,\,\, \thinspace in\;\; \Omega,  \\
\\ \theta=0, \;\;\;\;\ in\;\; \partial \Omega,\\
\end{array}
\right.
\end{equation}
then $0 \leqslant \theta \in L^{\infty}(\Omega), \; \limsup_{t\rightarrow \infty} |u(t,\phi)(x)| \leqslant \theta(x)$ uniformly in $x\in \bar{\Omega}$ and for $\phi$ in bounded subsets of $C([-\rho,0],H^{1}_{0}(\Omega))$. Also, if $|\phi(t)(x)|\leqslant \theta(x)$ for all $x\in \bar{\Omega}$, $t\in [-\rho,0]$,  then $|u(t,\phi)(x)|\leqslant \theta(x)$ for all $x\in \bar{\Omega}$ and $t \geqslant 0$.
\end{itemize}
\end{corol}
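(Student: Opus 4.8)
The plan is to dispatch the three parts in order, leaning on Theorem~\ref{corolariocomparacion} and on the absorbing estimate derived in the paragraph preceding the statement. Part~(i) is already essentially contained in Theorem~\ref{corolariocomparacion}: under~(\textbf{S}) the comparison solutions $u^{\pm}(\cdot,\pm K_i)$ of \eqref{f+}--\eqref{f-} are globally defined, and the step-by-step sandwich $u(t,-K_i)\le u(t,\phi)\le u(t,K_i)$ on $[\alpha^{-1}((i-1)\rho),\alpha^{-1}(i\rho)]$ forbids finite-time blow-up, so $u(\cdot,\phi)$ (and hence, via the time change, the solution of \eqref{problema}) extends to all $t\ge 0$. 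I would simply record this.

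For part~(ii) the idea is to upgrade the one-step absorbing computation made before the corollary into a statement that a single ball is \emph{invariant across all steps}. Fix $K$ large as in that computation, so that with delay bound $\gamma K$ one interval of the comparison dynamics maps the $L^\infty$-ball of radius $K$ into itself, the self-consistency being $e^{-\omega\rho/M}K+C_1^*/\omega<K$, which is solvable for large $K$ precisely because of~(\textbf{D}). Writing $K_i:=\sup_{t\in[\alpha^{-1}((i-1)\rho),\alpha^{-1}(i\rho)],\,x}|u(t,\phi)(x)|$, the same estimate applied on the $(i+1)$-st interval (whose length in $t$ lies in $[\rho/M,\rho/m]$, so the decay factor is at worst $e^{-\omega\rho/M}$) gives an affine recursion
\[
K_{i+1}\le\theta_0\,K_i+b,\qquad \theta_0<1,
\]
where $\theta_0$ and $b$ are built from $e^{-\omega\rho/M}$, $\tfrac{\gamma}{\omega m}$ and $C_1$, and (\textbf{D}) is exactly what forces $\theta_0<1$. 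Consequently $\limsup_i K_i\le K_\infty:=b/(1-\theta_0)$, and since $K_1$ is controlled by $\|\phi\|\le R$ through local existence plus one comparison step while every step consumes at least $\rho/M$ units of time, the geometric decay $K_i\le\theta_0^{\,i-1}(K_1-K_\infty)+K_\infty$ lets me pick $T_R$ so that \eqref{k2_infty} holds with the uniform constant $K_\infty$. The bound \eqref{k2_r} then follows from the smoothing of the semigroup: feeding the now-uniform $L^\infty\hookrightarrow L^2$ bound into the variation of constants formula and using $\|S(t)\|_{\mathcal{L}(X,X^{1/2})}\le C t^{-1/2}e^{-\delta t}$ yields a uniform $X^{1/2}=H^1_0$ estimate $K^\alpha$, exactly the strongly absorbing bounded set announced before the statement. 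Since the bounds are pointwise in $x$ and $u(t,\phi)$ for \eqref{problema} is recovered from \eqref{problemau} by the time change, the same constants control the solutions of \eqref{problema}.

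For part~(iii) the stationary comparison profile does the work. The elliptic problem \eqref{A+C_0} is uniquely solvable with $\theta=(A+\nu C_0 I)^{-1}C_1^*$; because~(\textbf{D}) makes the first eigenvalue $\omega$ of $A+\nu C_0 I$ positive, this operator is coercive and its inverse is positivity preserving (maximum principle), so $0\le\theta$, and as the solution of a one-dimensional linear elliptic equation with constant right-hand side it is explicit and bounded, $\theta\in L^\infty(\Omega)$. For the $\limsup$, the representation used before the corollary gives $u^+(t,K)\le e^{-(A+\nu C_0 I)t}K+\int_0^t e^{-(A+\nu C_0 I)(t-s)}C_1^*\,ds$; letting $t\to\infty$ the first term decays to $0$ while the integral converges to $\int_0^\infty e^{-(A+\nu C_0 I)s}C_1^*\,ds=(A+\nu C_0 I)^{-1}C_1^*=\theta$, so $\limsup_{t\to\infty}u^+(t,K)\le\theta$; the symmetric estimate for $u^-(t,-K)$ together with the sandwich from part~(i) yields $\limsup_{t\to\infty}|u(t,\phi)(x)|\le\theta(x)$, uniformly in $x$ and in $\phi$ over bounded sets. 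Finally, the invariance statement follows from reading $\theta$ as a stationary supersolution of the dominating problem: if $|\phi(t)(x)|\le\theta(x)$ on $[-\rho,0]$, then on the first interval the delay term is dominated by $\gamma\theta$, and comparison (\cite[Corollary 4.4]{CCJ2024}) against the constant-in-time profile $\theta$ gives $|u(t,\phi)|\le\theta$; an induction over the intervals $[\alpha^{-1}((i-1)\rho),\alpha^{-1}(i\rho)]$, using that the solution again stays below $\theta$ on each, propagates the bound to all $t\ge 0$.

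The step I expect to be the main obstacle is the recursion in part~(ii): because the delay imposes a different constant $K_i$ on each interval, boundedness of any single comparison solution is not enough, and one must rule out a drift of the sequence of bounds to infinity. The real work is to make the affine recursion $K_{i+1}\le\theta_0 K_i+b$ precise — in particular to check that the contraction factor produced by the worst-case interval length and the delay coefficient is genuinely the quantity controlled by~(\textbf{D}), and that $T_R$ can be taken uniformly over $\|\phi\|\le R$. The invariance step in part~(iii) carries a milder version of the same difficulty, since the supersolution bound must be propagated through the delay coupling one interval at a time.
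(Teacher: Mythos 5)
Your proposal is correct and follows essentially the same route as the paper, whose proof of this corollary is just a pointer to the absorbing-ball computation preceding the statement (for (i)--(ii)) and to the elliptic comparison function of \cite{CCJ2024} (for (iii)). Your affine recursion $K_{i+1}\leqslant \theta_0 K_i+b$ is simply an explicit rendering of the paper's claim that the $L^\infty$-ball of radius $K$ is absorbing under the step-by-step comparison, and the rest (semigroup smoothing for the $H^1_0$ bound, $\theta=(A+\nu C_0I)^{-1}C_1^*$ as stationary supersolution propagated through the delay by induction) matches the intended argument.
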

\begin{proof}
    Follows from the reasoning preceding the corollary and $(iii)$ follows as in \cite{CCJ2024} for the appropriate choice of $C_1^*$.
\end{proof}

\section{Global Attractors of Multivalued Semiflows}\label{Satt}

In this section we consider $h= 0$ and introduce multivalued semiflow in order to study the asymptotic behavior of  solutions to \eqref{problema}.	
Before we can state the results, let us introduce the terminology of multivalued semiflows and their global attractors.

\bigskip

Let $(X,\|\cdot\|_X)$ be a Banach space and $P(X)$ ($\mathscr{B}(X)$)  be the collection of non-empty (bounded) subsets of $X$.
\begin{defin}\cite{melnik1998attractors}
A map $G:\mathbb{R}^+ \times X  \rightarrow P(X )$ is called a multivalued semiflow if the following conditions are satisfied:
\begin{itemize}
\item[(1)]$G(0,\cdot)=I$ is the identity map;
\item[(2)] $G(t_1+t_2,x) \subset G(t_1,G(t_2,x)), ~\forall t_1, t_2 \in \mathbb{R}^+, ~\forall x \in X ,$
\end{itemize} 
where, if $B\subset X$, $G(t,B)=\bigcup_{x \in B} G(t,x),\; B\subset X $.
\end{defin}
If  $(2)$ holds with equality we say that the multivalued semiflow is strict.

\bigskip

Let $C = C([0,\infty), X)$ and $\mathcal{R}\subset C$. Consider the conditions:

\begin{itemize}
\item[(C1)] For any $z \in X$ there exists at least one $\eta \in \mathcal{R} $ such that $\eta(0)=z$.
\item[(C2)] $\eta_{\tau}(\cdot):= \eta(\cdot + \tau) \in \mathcal{R}$ for any $\tau \geqslant 0$ and $\eta \in \mathcal{R}$ (translation propriety). 
\item[(C3)] Let $\eta,\psi \in \mathcal{R}$ be such that $\eta(0)= \psi(s)$ for some $s>0$. Then, the function $\theta$ defined by   
$$
\theta (t): = \left\{ \begin{array}{lc} \eta(t), &0 \leqslant t \leqslant s, \\  \psi (t-s), &  s \leqslant t, 
\end{array} \right. 
$$ 
belongs to $\mathcal{R}$ (concatenation property). 
\item[(C4)] For any sequence $\{\eta_j\}_{j \in \N} \subset \mathcal{R}$ such that $\eta_j(0) \rightarrow z$ in $X$, there exists a subsequence $\{\eta_{\mu}\}_{\mu \in \N}$ of $\{\eta_j\}_{j \in \N}$ and $\eta \in \mathcal{R}$ such that $\eta_{\mu}(t) \rightarrow \eta(t)$ for each $t \geqslant 0.$     
\end{itemize}
 
 Observe that $\mathcal{R}$ fulfilling (C1) and (C2) gives rise to a multivalued semiflow $G$ through \eqref{semiflow}  and if besides (C3) holds,
then this multivalued semiflow is strict (see \cite{caraballo2003comparison}, Proposition 2 or \cite{kapustyan2012global}, Lemma 9).

For $x \in X$, $A, B \subset X$, we set $\operatorname{dist}(x,B)=\inf\{\rho(x,y):y\in B\}.$ The Hausdorff semi-distance from $A$ to $B$ will be defined by $\operatorname{dist}_H(A,B)=\sup_{x \in A}\{\operatorname{dist}(x,B)\}.$
 
\begin{defin}\cite{melnik1998attractors}
We say that $A\subset X$ attracts $B\subset X$  under the action of the multivalued semiflow $G$,  if $\operatorname{dist}_H(G(t,B),A) \rightarrow 0$ as $t \rightarrow +\infty$. A set $M$ is said to be an attracting set for $G$ if $M$ attracts each set $B \in \mathscr{B}(X)$ under the action of the multivalued semiflow $G$. If there exists a bounded attracting set $M$ for $G$ we say that $G$ is bounded dissipative.
\end{defin}
 
For $A \subset X$, define $ \gamma_t^+(A)= \bigcup_{\tau \geqslant t}G(\tau,A)$, $t\geqslant 0$. The set $\gamma^+(A)=\gamma_0^+(A)$ is called the positive orbit of $A$ and the set
$\omega(A)=\bigcap_{t \geqslant 0}\overline{\gamma_t^+(A)}$ is called the omega limit ($\omega$-limit) set of $A$.

\begin{defin}\cite{melnik1998attractors} 
The set $A$ is said to be negatively semi-invariant if $A \subset G(t,A), \; \forall t \in \mathbb{R}^+.$
\end{defin}

\begin{defin}\cite{melnik1998attractors}
The set $\mathcal{A}$ is called a global attractor of the multivalued semiflow $G$ if it satisfies the next conditions: 
\begin{itemize}
\item[(1)] $\mathcal{A}$ attracts any $B \in \mathscr{B}(X)$;
\item[(2)] $\mathcal{A}$ is negatively semi-invariant, i.e. $\mathcal{A} \subset G(t, \mathcal{A}), \; \forall t \in \mathbb{R}^+.$
\end{itemize}
\end{defin}

\begin{defin}
The map $G(t,\cdot): X \rightarrow P(X)$ is upper semicontinuous if for every sequence $x_n \rightarrow x \in X$ we have $\lim_{n\to \infty}{\rm dist}_H(G(t,x_n),G(t,x))=0$.
\end{defin}

Let $X=C([-\rho,0],H^1_0(0,1))$ and if $w:[-\rho,\infty)$ is a solution of  \eqref{problema}, for each $\tau\in [0,\infty)$ denote $w^\tau(\theta)=w(\tau+\theta)$, $\theta\in [-\rho,0]$. The function $\eta(\cdot,\phi):[0,\infty)\to  X$ defined by $\eta(\tau,\phi)=w^\tau$, $\tau\geqslant 0$ is a continuous function and it is referred as a solution of the retarded differential problem \eqref{problema} with initial function $\phi\in X$. Denote $\mathcal{D}(\phi)$ the set of all solutions of the retarded differential problem \eqref{problema} with $h=0$ with initial function $\phi$. For each $\tau \geqslant 0$ we denote the map $G(\tau ,\cdot):X \rightarrow P(X)$ defined by:
\begin{equation}\label{semiflow}
    G(\tau,\phi)=\{\eta(\tau,\phi)\in X: \eta(\cdot,\phi) \hbox{ is in } \mathcal{D}(\phi)\}.
\end{equation}

\begin{theorem} \label{multi-semi}
Let  $f$\!,\! $a$,\! $l$,\! and $\phi$ be continuous functions and assume that $f$ is such that, for every $r>0$, there is a constant $\kappa= \kappa(r) > 0 $ such that $ \kappa I + f(\cdot) $ is increasing in $[-r,r]$ and  that condition {$\mathbf{(D)}$} is satisfied. Defining $\mathcal{R}$ as the set of all solutions of \eqref{problema}, then $\mathcal{R}$ defines an upper semicontinuous strict multivalued semiflow  in $X=C([-\rho,0],H^1_0(0,1))$.
\end{theorem}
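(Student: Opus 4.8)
The plan is to verify conditions \textnormal{(C1)--(C4)} for the set $\mathcal{R}$ of solutions of \eqref{problema} and then read off the two assertions of the statement. By the remark following \textnormal{(C1)--(C4)}, once \textnormal{(C1)} and \textnormal{(C2)} hold the map $G$ defined in \eqref{semiflow} is a multivalued semiflow, adjoining \textnormal{(C3)} makes it strict, and upper semicontinuity will be extracted from \textnormal{(C4)}. I note at the outset that $\mathbf{(D)}$ presupposes $\mathbf{(S)}$, so Corollary \ref{limitacionsol} applies: all solutions are globally defined and satisfy the uniform bounds \eqref{k2_infty}--\eqref{k2_r}. These bounds, together with the compactness of the analytic semigroup $\{S(t)\}$, are exactly what will power the compactness argument in \textnormal{(C4)}.

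First I would dispatch \textnormal{(C1)} and \textnormal{(C2)}. For \textnormal{(C1)}, given $z=\phi\in X$, Theorem \ref{mild_problemau} with Corollary \ref{limitacionsol}(i) yields a global solution $w$ of \eqref{problema} with initial function $\phi$; since $w\in C([-\rho,\infty),H^1_0(0,1))$, the map $\tau\mapsto w^\tau$ is a continuous $X$-valued trajectory $\eta\in\mathcal{R}$ with $\eta(0)=\phi=z$. For \textnormal{(C2)}, I would use that \eqref{problema} is autonomous (we take $h\equiv 0$), so if $w$ solves it then $w(\cdot+\tau)$ solves it as well, with history segment $w^\tau$ at time $0$; hence translating the trajectory keeps it in $\mathcal{R}$. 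For \textnormal{(C3)} I would exploit that \eqref{problema} is a delay equation with \emph{constant} delay $\rho$, so the evolution beyond any time $s$ is determined solely by the history segment $w^s\in X$. When the trajectories $\eta$ and $\psi$ meet at the junction time $s$, the concatenation $\theta$ coincides with a solution on $[0,s]$ and, on $[s,\infty)$, with a time-translate of $\psi$ whose history at time $s$ is precisely the shared state; since the delay term $w(\tau-\rho)$ on $[s,s+\rho]$ is furnished exactly by that history, the glued function satisfies \eqref{problema} on each piece and matches continuously at $s$, so $\theta\in\mathcal{R}$. The absence of uniqueness is no obstruction, since we only need $\theta$ to be \emph{a} solution.

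The crux is \textnormal{(C4)}, from which upper semicontinuity follows. Take $\eta_j\in\mathcal{R}$ with $\phi_j:=\eta_j(0)\to z$ in $X$ and let $w_j$ be the corresponding solutions. I would argue by the method of steps of Theorem \ref{T_regularity}. On the first interval the delay term $\phi_j(\alpha_j(\cdot)-\rho)$ is explicit; the uniform bounds of Corollary \ref{limitacionsol}, together with the smoothing of the compact semigroup $S(t)$ — which places $w_j$ in $X^\beta$, $\beta>\tfrac12$, with uniform bounds and yields equicontinuity in $H^1_0=X^{1/2}$ through the variation-of-constants formula — permit an Arzel\`a--Ascoli extraction of a subsequence converging in $C([0,\alpha^{-1}(\rho)],H^1_0)$. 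One then passes to the limit in the mild formulation of \eqref{problemau}, using continuity of $f$, $a$ and $l$ and the uniform Lipschitz control $\alpha_j'\in[m,M]$ of the time changes, to identify the limit as a solution. Iterating over the intervals $[(n-1)\rho,n\rho]$ and taking a diagonal subsequence produces $\eta\in\mathcal{R}$ with $\eta(0)=z$ and $\eta_\mu(\tau)\to\eta(\tau)$ for every $\tau\geqslant 0$, which is \textnormal{(C4)}. Upper semicontinuity of $G(\tau,\cdot)$ then follows by the standard contradiction: if some $y_\mu=\eta_\mu(\tau)\in G(\tau,\phi_\mu)$ stayed a fixed distance from $G(\tau,z)$, the limit $\eta(\tau)\in G(\tau,z)$ supplied by \textnormal{(C4)} would contradict this.

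The hard part will be the limit passage in \textnormal{(C4)}: controlling simultaneously the nonlocal diffusion $a(l(w_j))$ and the solution-dependent time change $\alpha_j$ (equivalently, the delay that depends on the initial function), so that the limit genuinely solves \eqref{problema} rather than some relaxed equation. The uniform bounds guaranteed by $\mathbf{(D)}$ and the compactness of the semigroup are precisely the ingredients that make this passage go through.
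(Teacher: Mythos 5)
Your proposal follows essentially the same route as the paper: (C1) from the existence result, (C2)--(C3) from the structure of the solutions, (C4) via the smoothing estimate on $\|A^{\beta}\xi_j(t)\|_{H^1_0}$ with $\beta+\tfrac12<1$, equicontinuity from the variation-of-constants formula, the uniform bounds of Corollary \ref{limitacionsol}, and the Arzel\`a--Ascoli theorem, with upper semicontinuity obtained by the same contradiction argument. If anything, you are more explicit than the paper about the final step of identifying the Arzel\`a--Ascoli limit as a genuine solution (passing to the limit in $a(l(\cdot))$ and in the solution-dependent time change $\alpha_j$), a point the paper leaves implicit.
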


\begin{proof}
Let $G:\mathbb{R}^+\times X\to P(X)$ be the map defined by \eqref{semiflow}. Property $(C1)$ follows from Theorem \ref{mild_problemau}, $(C2)$ and $(C3)$ are immediate consequences of our definition of solution (variation of constants formula). To prove property $(C4)$ consider a sequence
$\eta_j$ with $\eta_j\in \mathcal{D}(\phi_j)$ and $\phi_j\stackrel{j\to\infty}{\longrightarrow} \phi$ in $X$.

Let $w_j:[-r,\infty)\to H^1_0(0,1)$  be given by $w_j(\tau)=\phi_j(\tau)$ if $\tau\in [-\rho,0]$ and $w_j(\tau)=\eta_j(\tau,\phi_j)(0)$ for $\tau\geqslant 0$. 
 If $t=\int_{0}^{\tau}  a(l((w_j(r)))^{-1}dr=:\alpha^{-1}_j(\tau) $, let $\xi_j:[0,\infty)\to H^1_0(0,1)$ be the function given by $\xi_j(t)=w_j(\alpha(t))$.

For $0<\beta+\frac{1}{2}<1$ and $t \in ( (n-1)\rho,n\rho]$, $n\geqslant 1$ we have

\begin{equation*}
\begin{split}
&\| A^{\beta}\xi_j(t) \|_{H^{1}_{0}} \! \leqslant\! \| A^{\beta}S(t)\xi_j(0) \|_{H^{1}_{0}} \!+\! \displaystyle \int_{\alpha^{-1}((n-1)\rho)}^{t}\!\!\|A^{\beta}S(t\!-\!s)\dfrac{(\lambda f(\xi_j(s))\!+\!\gamma w_j(\alpha(s) \!-\!\rho))}{a(l(\xi_j(s)))}\|_{H^{1}_{0}} ds  \\
&+ \displaystyle \sum\limits_{m=1}^{n-1}\displaystyle \int_{ \alpha^{-1}((m-1)\rho)}^{\alpha^{-1}( m\rho)}\|A^{\beta}S(t-s)\dfrac{(\lambda f(\xi_j(s))+\gamma w_j(\alpha(s) \!-\!\rho))}{a(l(\xi_j(s)))}\|_{H^{1}_{0}}ds \\
&  \leqslant \! t^{-\beta} e^{-at}\|\xi_j(0)\|_{H^{1}_{0}} \!+\! N \displaystyle\! \int_{ \alpha^{-1}((n-1)\rho)}^{t}\!\!\!\!\! (t\!-\!s)^{-\frac{1}{2}\! \!-\beta}e^{-a(t\!-\!s)}ds \!+\! \displaystyle N \sum\limits_{m=1}^{n-1}\!\displaystyle \int_{\alpha^{-1}((m-1)\rho)}^{\alpha^{-1}(m\rho)} \!\!(t\!-\!s)^{-\frac{1}{2} \!-\!\beta}e^{-a(t\!-\!s)}ds\\
&  \leqslant  t^{-\beta} e^{-at}\|\xi_j(0)\|_{H^{1}_{0}} + N \displaystyle \int_{0}^\infty s^{-\frac{1}{2} -\beta}e^{- \pi^2  s}ds.
\end{split}
\end{equation*}

Since $X^{\frac{1}{2} + \beta } \subset \subset H^{1}_{0}(\Omega)$, it follows that the family of functions 
$\{\xi_j(t): \; \xi_j(\cdot) \in  \mathcal{D}(w_j), \; j \in \N \}$ is precompact in $H^{1}_{0}(\Omega)$.
Now, we are going to check that the family of functions  $\{ \xi_j \in \mathcal{R}, ~j \in \mathbb{N} \}$
is equicontinuous. In fact, for $r>0$ $ t_1 > 0$, if $t_1\!+\!r,~ t_1 \in ((n-1) \rho,  n\rho]$  we have
\begin{equation*}
\begin{split}
& \|\xi_j(t_1+r) - \xi_j(t_1) \|_{H^{1}_{0}}   \leqslant \| (S(t_1+r)-S(t_1))\xi_j(0)\|_{H^{1}_{0}} \\ 
& + \| \int_{\alpha^{-1}( (n-1)\rho)}^{t_1+r} S(t_1+r-s)\dfrac{(\lambda f(\xi_j(s))+\gamma w_j(\alpha(s) \!-\!\rho))}{a(l(\xi_j(s)))}ds  \\ 
& -\int_{\alpha^{-1}((n-1)\rho)}^{t_1} S(t_1-s)\dfrac{(\lambda f(\xi_j(s))+\gamma w_j(\alpha(s) \!-\!\rho))}{a(l(\xi_j(s)))}ds\|_{H^1_0(0,1)}  \\ 
& + \| \sum_{m=1}^{n-1} \int_{\alpha^{-1}((m-1)\rho)}^{\alpha^{-1}(m \rho)} S(t_1+r-s)\dfrac{(\lambda f(\xi_j(s))+\gamma w_j(\alpha(s) \!-\!\rho))}{a(l(\xi_j(s)))} \\ 
&- \sum_{m=1}^{n-1} \int_{\alpha^{-1}((m-1)\rho)}^{ \alpha^{-1}(m\rho) } S(t_1-s)\dfrac{(\lambda f(\xi_j(s))+\gamma w_j(\alpha(s) \!-\!\rho))}{a(l(\xi_j(s)))}\|_{H^1_0} \\ 
&  \leqslant \| (S(t_1)(S(r)-I)\xi_j(0)\|_{H^{1}_{0}}+  N  \int_{ \alpha^{-1}((n-1) \rho)}^{t_1} \|S(t_1+r-s) -  S(t_1-s) \|_{L(L^2,H^1_0)} ds  \\ 
& +  \int_{t_1}^{t_1+r}  \|S(t_1+r-s) \dfrac{(\lambda f(\xi_j(s))+\gamma w_j(\alpha(s) \!-\!\rho))}{a(l(\xi_j(s)))}\|_{H^{1}_{0}}ds  \\ 
&+ N  \sum_{m=1}^{n-1} \int_{ \alpha^{-1}((m-1)\rho)}^{ \alpha^{-1}(m\rho)}\|S(t_1+r-s) -  S(t_1-s) \|_{L(L^2,H^1_0)} ds \\ 
&\leqslant  M\|(S(r)-I)\xi_j(0)\|_{H^{1}_{0}}  + N  \int_{ \alpha^{-1}((n-1)\rho)}^{t_1} \|(S(r)-I) A^{-\beta}\|_{L(L^2)} \|A^{\beta+ \frac{1}{2} }S( t_1-s )\|_{L(L^2)}ds 
\\ & + NM' \int_{t_1}^{t_1+r} (t_1+r- s) ^{-\frac{1}{2}} e^{-a(t_1+r-s)}ds
\\ & +N \sum_{m=1}^{n-1} \int_{ \alpha^{-1}((m-1)\rho)}^{ \alpha^{-1}(m\rho)}\|(S(t_1+r-s)-I) A^{-\beta}\|_{L(L^2)} \|A^{\beta+ \frac{1}{2} }S( t_1-s )\|_{L(L^2)}ds \\
&\leqslant  M\|(S(r)-I)\xi_j(0)\|_{H^{1}_{0}} 
 + NK  \int_{0 }^{t_1} r^{\beta} (t_1 -s)^{-\frac{1}{2}-\beta} e^{-a(t_1 - s)} ds \\ &
 + NK'  \int_{t_1}^{t_1+r} (t_1+r- s) ^{-\frac{1}{2}} e^{-a(t_1+r-s)}ds,
 \\ & \leqslant  M\|(S(r)-I)\xi_j(0)\|_{H^{1}_{0}} + r ^{\beta} NK  \int_{0}^{\infty}  s^{-\frac{1}{2}-\beta} e^{-a s} ds 
 + NK'  \int_{0}^{r} s^{-\frac{1}{2}} e^{-as}ds,
\end{split}
\end{equation*}
This last expression is independent of $\xi_j, \; j \in \N$ and tends to zero as $r \rightarrow 0$  since $S(t)$ is a $C_0$ semigroup.
From Corollary \ref{limitacionsol}
\[
\sup_{t \in [0,\infty), \; j \in \N} \|\xi_j(t)\|_{H^{1}_{0}}  < C,
\]
then (C4) follows from the Arzel\`a-Ascoli Theorem applied to the sequencee $\{\eta_j\}$.

To prove that the strict multivalued semiflow $G$  is upper semicontinuous we note that if $\phi_j \stackrel{j\to\infty}{\longrightarrow} \phi_0$ in $C([-\rho,0],H^1_0(0,1))$ and $\eta(\cdot,\phi_j)\in \mathcal{D}(\phi_j)$ are such that ${\rm dist}(\eta(\tau,\phi_j),G(\tau,\phi_0))\stackrel{j\to\infty}{\not\longrightarrow}0$,
there exists $\epsilon>0$ and an infinite set $\N_1\subset \N$ such that  ${\rm dist}(\eta(\tau,\phi_j),G(\tau,\phi_0))\geq \epsilon$ for all $j\in \N_1$. Since $\{\eta(\cdot,\phi_j): j\in \N_1\}\subset C([-\rho,\infty),H^1_0(0,1))$ has a convergent subsequence to a solution $\eta(\cdot,\phi_0)$, we have that $\eta(\tau,\phi_0)\in G(t,\phi_0)$, which is a contradiction.
\end{proof}

\begin{defin}\cite{melnik1998attractors}\label{def:existsattractor}
A multivalued semiflow $G$ is said to be asymptotically upper semicompact if for all $B\in  \mathscr{B}(X)$ such that, for some $T(B)>0$, $\gamma_{T(B)}^+(B)\in  \mathscr{B}(X)$, any sequence $\xi_n\in G(t_n,B)$ with $t_n\stackrel{n\to\infty}{\longrightarrow}\infty$ has a convergent subsequence.
\end{defin}

\begin{theorem}\cite{melnik1998attractors}\label{teo:existsattractor}
If $G$ is an asymptotically upper semicompact multivalued semiflow which is bounded dissipative and such that $G(\tau,\cdot): X \rightarrow P(X)$ is upper semicontinuous,  for each $\tau\geqslant 0$, then $G$ has the global attractor $\mathcal{A}$.
\end{theorem}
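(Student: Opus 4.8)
The plan is to build $\mathcal{A}$ as the $\omega$-limit set of a bounded attracting set and then verify the two defining clauses of a global attractor, namely attraction of bounded sets and negative semi-invariance. Since $G$ is bounded dissipative, I first fix a bounded attracting set $M$. Because $M$ is bounded and $\operatorname{dist}_H(G(t,B),M)\to 0$ for every $B\in\mathscr{B}(X)$, for each such $B$ there is a $T(B)\geqslant 0$ with $G(t,B)$ contained in the unit neighbourhood of $M$ for $t\geqslant T(B)$; hence $\gamma_{T(B)}^+(B)$ is bounded and the asymptotic upper semicompactness hypothesis is applicable to every bounded set. I take as candidate $\mathcal{A}:=\omega(M)=\bigcap_{t\geqslant 0}\overline{\gamma_t^+(M)}$.

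Next I would record the sequential characterization that $y\in\omega(B)$ if and only if there exist $t_n\to\infty$, $x_n\in B$ and $y_n\in G(t_n,x_n)$ with $y_n\to y$. Applying asymptotic upper semicompactness to $B=M$ then shows $\omega(M)$ is nonempty and precompact, while closedness is automatic as an intersection of closed sets, so $\mathcal{A}$ is compact. For attraction I argue by contradiction: if $\mathcal{A}$ did not attract some bounded $B$, there would be $\epsilon>0$, $t_n\to\infty$ and $y_n\in G(t_n,B)$ with $\operatorname{dist}(y_n,\mathcal{A})\geqslant\epsilon$. Using that $M$ attracts $B$ to reduce to a tail governed by $M$, asymptotic upper semicompactness extracts a convergent subsequence $y_{n_k}\to y$, and the characterization forces $y\in\omega(M)=\mathcal{A}$, contradicting $\operatorname{dist}(y_n,\mathcal{A})\geqslant\epsilon$.

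The delicate clause is negative semi-invariance, $\mathcal{A}\subset G(t,\mathcal{A})$, and this is precisely where upper semicontinuity enters. Fix $t\geqslant 0$ and $y\in\mathcal{A}$, and choose $t_n\to\infty$, $x_n\in M$, $y_n\in G(t_n,x_n)$ with $y_n\to y$. For $n$ large the semiflow inclusion $G(t_n,x_n)\subset G(t,G(t_n-t,x_n))$ supplies $z_n\in G(t_n-t,x_n)$ with $y_n\in G(t,z_n)$. Asymptotic upper semicompactness, applied with $s_n=t_n-t\to\infty$ and $B=M$, yields a convergent subsequence $z_{n_k}\to z$, and the characterization gives $z\in\omega(M)=\mathcal{A}$. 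Finally, upper semicontinuity of $G(t,\cdot)$ at $z$, together with $y_{n_k}\in G(t,z_{n_k})$, $y_{n_k}\to y$, and closedness of the value $G(t,z)$, forces $\operatorname{dist}(y,G(t,z))=0$ and hence $y\in G(t,z)\subset G(t,\mathcal{A})$. This gives $\mathcal{A}\subset G(t,\mathcal{A})$ and completes the verification.

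The main obstacle is exactly this last step: it requires three ingredients to cooperate simultaneously, the semiflow inclusion to peel off a time-$t$ tail, asymptotic compactness to pass to a convergent subsequence of the pre-images $z_n$, and upper semicontinuity with closed values to transfer the membership $y_n\in G(t,z_n)$ to the limit. A secondary point that must be handled with care throughout is ensuring the orbit-boundedness needed to invoke asymptotic upper semicompactness is genuinely available; this is supplied in each instance by the bounded attracting set $M$.
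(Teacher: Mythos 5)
The paper does not actually prove this statement --- it is quoted verbatim from \cite{melnik1998attractors} --- so your proposal can only be measured against the standard argument there. Your choice $\mathcal{A}=\omega(M)$ and your treatment of negative semi-invariance follow that standard route, and the semi-invariance step (peel off a time-$t$ tail via $G(t_n,x_n)\subset G(t,G(t_n-t,x_n))$, extract a convergent subsequence of the pre-images, pass to the limit by upper semicontinuity) is correct as far as it goes. One caveat: the closedness of the values $G(t,z)$, which you invoke at the very end, is not among the hypotheses as stated here; it is a standing assumption in \cite{melnik1998attractors}, and without it one only obtains $\mathcal{A}\subset\overline{G(t,\mathcal{A})}$.

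The genuine gap is in the attraction step. From $y_n\in G(t_n,B)$, $t_n\to\infty$, $y_{n_k}\to y$, the sequential characterization gives $y\in\omega(B)$, \emph{not} $y\in\omega(M)$. Your bridge, ``using that $M$ attracts $B$ to reduce to a tail governed by $M$'', does not work as stated: attraction only says that $G(s,B)$ lies in a small neighbourhood of $M$ for large $s$, not inside $M$; and after writing $y_n\in G(t_n-s,z_n)$ with $z_n\in G(s,B)$ close to some $m_n\in M$, you would need to compare $G(t_n-s,z_n)$ with $G(t_n-s,m_n)$ for times $t_n-s\to\infty$, whereas upper semicontinuity is assumed only for each fixed time, so there is no uniform-in-time continuity available. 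The standard repair is to run your negative semi-invariance argument for $\omega(B)$ for \emph{every} bounded $B$, not only for $M$: show that $\omega(B)$ is nonempty, attracts $B$ (here your contradiction argument does close, because the limit lands in $\omega(B)$ itself), and satisfies $\omega(B)\subset G(t,\omega(B))$ for all $t\geqslant 0$. Then either take $\mathcal{A}=\bigcup_{B\in\mathscr{B}(X)}\omega(B)$, which is negatively semi-invariant and bounded because each $\omega(B)\subset\overline{M}$, or keep $\mathcal{A}=\omega(M)$ with $M$ replaced by its closure and deduce $\omega(B)\subset\omega(M)$ from $\omega(B)\subset G(t,\omega(B))\subset\gamma_t^+(M)$ for all $t$. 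Either way, the inclusion $\omega(B)\subset\omega(M)$ (equivalently, the attraction of $B$ by something inside $\mathcal{A}$) is a step that must be argued, and it is precisely the one your write-up skips.
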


\begin{theorem} \label{Tattractor}
Assume that $f,~a\circ l$ are continuous, $h\equiv 0$ and $f$ is such that, for every $r>0$, there is a constant $\kappa= \kappa(r) > 0 $ such that $ \kappa I + f(\cdot) $ is increasing in $[-r,r]$  and that condition $\mathbf{(D)}$ is satisfied. Then, the multivalued semiflow $G$  defined in Theorem \ref{multi-semi} has a global attractor.
\end{theorem}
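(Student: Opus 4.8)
The plan is to verify the three hypotheses of Theorem \ref{teo:existsattractor} for the strict multivalued semiflow $G$ constructed in Theorem \ref{multi-semi}: upper semicontinuity of $G(\tau,\cdot)$, bounded dissipativity, and asymptotic upper semicompactness. The first is already part of the conclusion of Theorem \ref{multi-semi}, so no further work is needed there, and I would simply record it.

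For bounded dissipativity, I would invoke Corollary \ref{limitacionsol}(ii). Given $R>0$, for every $\phi$ with $\|\phi\|_X\le R$ the estimate \eqref{k2_r} produces a time $T_R$ and a constant $K^\alpha$, both independent of $\phi$, such that $\|u^+(t,\phi)\|_{H^1_0}\le K^\alpha$ for all $t\ge T_R$; by the comparison $u(t,-K_n)\le u(t,\phi)\le u(t,K_n)$ the same bound controls every solution of \eqref{problema}. Hence, for $\tau\ge T_R+\rho$ the segment $w^\tau(\theta)=w(\tau+\theta)$, $\theta\in[-\rho,0]$, lies in the closed ball of radius $K^\alpha$ of $H^1_0(0,1)$ for every $\theta$, so $G(\tau,B)$ is contained in the corresponding ball of $X=C([-\rho,0],H^1_0(0,1))$ whenever $B$ is bounded. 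That ball is a bounded absorbing set, which is exactly bounded dissipativity.

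The substantial point is asymptotic upper semicompactness. Let $B\in\mathscr B(X)$ be such that $\gamma^+_{T(B)}(B)$ is bounded for some $T(B)>0$, and take $\xi_n\in G(t_n,B)$ with $t_n\to\infty$; by definition each $\xi_n$ is a segment $w_n^{t_n}$, where $w_n$ solves \eqref{problema} with $w_n^0=\phi_n\in B$ and $w_n^{t_n}(\theta)=w_n(t_n+\theta)$, $\theta\in[-\rho,0]$. I would extract a convergent subsequence by applying the Arzel\`a--Ascoli theorem to the family $\{\theta\mapsto w_n(t_n+\theta)\}_n$ in $H^1_0(0,1)$, reusing the two estimates already carried out in the proof of Theorem \ref{multi-semi}. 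The first is the bound on $\|A^{\beta}\cdot\|_{H^1_0}$ with $0<\beta+\frac12<1$: combined with the compact embedding $X^{\frac12+\beta}\subset\subset H^1_0(\Omega)$, it shows that $\{w_n(t_n+\theta)\}_n$ is precompact in $H^1_0$ for each fixed $\theta$. The second is the equicontinuity estimate, whose bound on $\|w_n(\cdot+r)-w_n(\cdot)\|_{H^1_0}$ is independent of $n$ and tends to $0$ as $r\to0$, giving equicontinuity in $\theta$ uniformly in $n$. What makes both estimates effective at the arbitrarily large times $t_n$, rather than on a fixed compact interval as in the existence argument, is the uniform-in-time bound $\sup_{t\ge T_R,\,n}\|w_n(t)\|_{H^1_0}\le K^\alpha$ furnished by \eqref{k2_r} of Corollary \ref{limitacionsol}(ii). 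Arzel\`a--Ascoli then yields a subsequence converging in $X$.

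With the three hypotheses in hand, Theorem \ref{teo:existsattractor} applies directly and $G$ possesses a global attractor $\mathcal A$. I expect the asymptotic compactness step to be the main obstacle: the delay forces the phase space to be $C([-\rho,0],H^1_0(0,1))$ rather than $H^1_0(0,1)$ itself, so the smoothing of the analytic semigroup must be combined with the step-by-step uniform bounds of Corollary \ref{limitacionsol} in order to control the segments at arbitrarily large times, instead of merely on a fixed time window as in the proofs of existence and of Theorem \ref{multi-semi}.
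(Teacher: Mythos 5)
Your proposal is correct and follows essentially the same route as the paper: the paper's own proof is a two-line sketch that reduces the statement to Theorem \ref{teo:existsattractor}, citing Corollary \ref{limitacionsol} for bounded dissipativity and the estimate \eqref{k2_r} together with the compact embedding of $X^\alpha$ into $H^1_0(0,1)$ for asymptotic upper semicompactness. Your version merely fills in the details the paper leaves implicit (in particular the Arzel\`a--Ascoli argument on the segments in $C([-\rho,0],H^1_0(0,1))$, reusing the smoothing and equicontinuity estimates from the proof of Theorem \ref{multi-semi}), which is a faithful and slightly more careful rendering of the same argument.
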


\begin{proof}
We only need to prove that $G$ is asymptotically upper semicompact and bounded dissipative. Bounded dissipativeness follows from Corollary \ref{limitacionsol}. As for the property that $G$ is asymptotically upper semicompact it follows from estimate \eqref{k2_r} in  Corollary \ref{limitacionsol} and from the compact embedding of $X^\alpha$ into $H^1_0(0,1)$ for $\alpha \in (\frac12,1)$.
\end{proof}

We introduce the following definition that will be necessary to enunciate a standard well-known  result for describing the attractor as the union of bounded complete trajectories reads in the multivalued case as follows.
\begin{defin}
A map $\gamma: \mathbb{R}\rightarrow H^{1}_{0}(\Omega) $ is called global solution of $\mathcal{R}$ (resp. of $G$) if 
$\gamma(\cdot + h)|_{[0,\infty)} \in \mathcal{R}$ for all $h \in \mathbb{R}$ (resp. if $\gamma(\tau + \sigma) \in G(\tau,\gamma(\sigma))$ for all $\sigma \in \mathbb{R}$ and $\tau \geqslant 0).$
\end{defin} 
\begin{theorem}\cite{caballero2019robustness}\label{standartglobalattractor}
Consider $\mathcal{R}$ satisfying (C1) and (C2), and either (C3) or (C4).\\
Assume also that $G$ possesses a compact global attractor $\mathcal{A}$. Then  
$$
\mathcal{A} = \{\gamma(0): ~\gamma \in \mathbb{K}\}= \bigcup_{\tau \in \mathbb{R}}\{\gamma(\tau): ~\gamma \in \mathbb{K}\}
$$
where $\mathbb{K}$ denotes the set of all bounded global solutions in $\mathcal{R}$.
\end{theorem}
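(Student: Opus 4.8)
The plan is to reduce the double equality to the single identity $\mathcal{A}=\Theta$, where $\Theta:=\{\gamma(0):\gamma\in\mathbb{K}\}$, and then prove the two inclusions separately. The second equality is a matter of translation invariance: the set $\mathbb{K}$ of bounded global solutions is stable under the shifts $\gamma\mapsto\gamma(\cdot+\tau)$, directly from the definition of a global solution, and boundedness is clearly preserved, so $\gamma(\tau)=(\gamma(\cdot+\tau))(0)\in\Theta$ for every $\gamma\in\mathbb{K}$ and every $\tau\in\mathbb{R}$. Hence $\bigcup_{\tau\in\mathbb{R}}\{\gamma(\tau):\gamma\in\mathbb{K}\}=\Theta$, and it remains to show $\mathcal{A}=\Theta$.

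For the inclusion $\Theta\subset\mathcal{A}$ I would argue by attraction. Fix $\gamma\in\mathbb{K}$ and set $B:=\{\gamma(s):s\in\mathbb{R}\}$, which is bounded. Given $\tau\in\mathbb{R}$ and $t>0$, the definition of global solution provides $\psi\in\mathcal{R}$ with $\psi(0)=\gamma(\tau-t)$ and $\psi(t)=\gamma(\tau)$, so that $\gamma(\tau)\in G(t,\gamma(\tau-t))\subset G(t,B)$. Therefore $\operatorname{dist}(\gamma(\tau),\mathcal{A})\leqslant\operatorname{dist}_H(G(t,B),\mathcal{A})$, and the right-hand side tends to $0$ as $t\to\infty$ because $\mathcal{A}$ attracts the bounded set $B$. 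Since the left-hand side is independent of $t$, it vanishes, and the compactness (hence closedness) of $\mathcal{A}$ yields $\gamma(\tau)\in\mathcal{A}$; in particular $\gamma(0)\in\mathcal{A}$.

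For the reverse inclusion $\mathcal{A}\subset\Theta$, the heart of the matter, I would construct a bounded global solution through an arbitrary $x\in\mathcal{A}$. The forward half is easy: by (C1) choose $\eta\in\mathcal{R}$ with $\eta(0)=x$, and observe that bounded dissipativeness of $G$, together with the uniform bounds of Corollary \ref{limitacionsol}, forces the positive orbit of $x$ to be bounded, so $\eta$ is bounded on $[0,\infty)$. The backward half rests on the negative semi-invariance $\mathcal{A}\subset G(t,\mathcal{A})$: setting $x=x_0$ and choosing recursively $x_{-n}\in\mathcal{A}$ with $x_{-(n-1)}\in G(1,x_{-n})$, each step is realized by a piece $\eta_n\in\mathcal{R}$ with $\eta_n(0)=x_{-n}$ and $\eta_n(1)=x_{-(n-1)}$ taking values in $\mathcal{A}$. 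Concatenating $\eta_N,\dots,\eta_1$ by (C3) produces, for each $N$, an element $\zeta_N\in\mathcal{R}$ on $[0,N]$ with $\zeta_N(0)=x_{-N}$, $\zeta_N(N)=x$, and graph contained in $\mathcal{A}$, i.e. an approximate backward trajectory of length $N$ ending at $x$.

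The main obstacle is the passage to the limit $N\to\infty$: one must upgrade this family of finite pieces into a genuine complete solution in $\mathcal{R}$, rather than a mere collection of segments. Here I would exploit the compactness of $\mathcal{A}$ to extract, along a subsequence, limits of the backward nodes $x_{-n}$, and then invoke the convergence property (C4), or, when only (C3) is assumed, the upper semicontinuity of $G$ established in Theorem \ref{multi-semi} together with the compactness of $\mathcal{A}$, in a diagonal argument over $N$ to obtain a map $\gamma:(-\infty,0]\to\mathcal{A}$ with $\gamma(0)=x$ satisfying $\gamma(\tau+\sigma)\in G(\tau,\gamma(\sigma))$ for all $\sigma\leqslant 0$ and $\tau\geqslant 0$. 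Boundedness on $(-\infty,0]$ is automatic, since the values lie in the compact attractor. Finally, concatenating this backward trajectory with the forward solution $\eta$ at time $0$, once more via (C3), yields a bounded global solution $\gamma\in\mathbb{K}$ with $\gamma(0)=x$, whence $x\in\Theta$. The delicate points, all handled by (C3) and (C4), are verifying that the diagonal limit remains in $\mathcal{R}$ and that the pointwise limits match at the gluing time $0$.
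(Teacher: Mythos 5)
First, a point of comparison: the paper offers no proof of this statement — Theorem \ref{standartglobalattractor} is imported verbatim from \cite{caballero2019robustness} as a known result — so there is no internal argument to measure yours against; I can only assess your proof on its own terms. Your reduction to the single identity $\mathcal{A}=\Theta$, the shift-invariance argument for the second equality, and the attraction argument for $\Theta\subset\mathcal{A}$ are all correct and standard.

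The gaps are in $\mathcal{A}\subset\Theta$. (i) You assert that the connecting pieces $\eta_n$ ``take values in $\mathcal{A}$'' and that boundedness of the backward branch is ``automatic, since the values lie in the compact attractor.'' Negative semi-invariance only places the nodes $x_{-n}$ in $\mathcal{A}$; an element of $\mathcal{R}$ joining two points of $\mathcal{A}$ need not remain in $\mathcal{A}$ in between, and a countable union of the compact arcs $\eta_n([0,1])$ need not be bounded. To close this you must first prove that $\mathcal{A}$ is invariant, i.e.\ $G(t,\mathcal{A})=\mathcal{A}$ — which does follow for a strict semiflow from the monotonicity of $t\mapsto G(t,\mathcal{A})$ (a consequence of strictness plus negative semi-invariance) together with attraction and closedness of $\mathcal{A}$ — but that is a genuine step, not a triviality, and it is not available under (C4) alone. (ii) The hypothesis is ``(C3) \emph{or} (C4)'', and your construction mixes the two regimes. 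The recursive concatenation producing $\zeta_N$, and the final gluing of the backward branch to the forward solution ``via (C3)'', are only licit when (C3) holds; but when (C3) holds no diagonal limit is needed at all: fix the pieces $\eta_n$ once and for all, set $\gamma(t)=\eta_n(t+n)$ on $[-n,-n+1]$, and verify $\gamma(\cdot+h)\vert_{[0,\infty)}\in\mathcal{R}$ by finitely many applications of (C3) combined with (C2). Conversely, under (C4) without (C3) concatenation is forbidden, so the backward trajectory must be built differently — use $x\in\mathcal{A}\subset G(n,\mathcal{A})$ to obtain a \emph{single} $\xi_n\in\mathcal{R}$ with $\xi_n(n)=x$, shift it by (C2), and extract a diagonal limit via (C4), checking that the limit segments are again in $\mathcal{R}$ and fit together. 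Your sentence ``when only (C3) is assumed, \dots\ a diagonal argument'' has the two alternatives interchanged, and as written neither alternative is actually carried through to the end.
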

 
\begin{theorem}\label{attractorglobal}
Assume that $f,~a,~l$ are continuous, $h\equiv 0$ and $f$ is such that, for every $r>0$, there is a constant $\kappa= \kappa(r) > 0 $ such that $ \kappa I + f(\cdot) $ is increasing in $[-r,r]$ and that condition $\mathbf{(D)}$ is satisfied. Then, the multivalued semiflow $G$ defined in Theorem \ref{multi-semi} has a global attractor $\mathcal{A}$ in $X$ such that 
\begin{equation*}
\begin{split}
&\mathcal{A} \subset \Sigma(\phi):=\{\phi \in C([-\rho,0], L^\infty(\Omega)),\; |\phi(\sigma)(x)| \leqslant \theta(x),\ x \in \Omega, \sigma \in [-\rho,0]\},
\end{split}
\end{equation*}
where $\theta$ is the function given in Corollary \ref{limitacionsol} (iii). If  \eqref{modulusofcont} holds then 
\begin{equation*}
\begin{split}
&\mathcal{A} \subset C([-\rho,0],H^2(0,1)\cap H^1_0(0,1)).
\end{split}
\end{equation*}
\end{theorem}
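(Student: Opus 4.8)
The plan is to prove the two inclusions separately, since the statement has two distinct conclusions of different character. For the first inclusion $\mathcal{A}\subset \Sigma(\phi)$, the idea is to combine the characterization of the attractor as the set of bounded global solutions (Theorem \ref{standartglobalattractor}) with the pointwise asymptotic bound from Corollary \ref{limitacionsol} (iii). First I would invoke Theorem \ref{standartglobalattractor}: since Theorem \ref{multi-semi} establishes that $\mathcal{R}$ satisfies (C1)--(C4) and is a strict upper semicontinuous multivalued semiflow, and Theorem \ref{Tattractor} gives the existence of the compact global attractor $\mathcal{A}$, we may write every point of $\mathcal{A}$ as $\gamma(\tau)$ for some bounded global solution $\gamma\in\mathbb{K}$. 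The key observation is that a bounded global solution is defined for all $\tau\in\mathbb{R}$, so for any fixed time we may look arbitrarily far back into its past. Fixing $\gamma\in\mathbb{K}$ and $\tau_0\in\mathbb{R}$, I would write $\gamma(\tau_0)=w^{\tau_0}$ for the associated solution $w$ of \eqref{problema}, and apply Corollary \ref{limitacionsol} (iii) to the solution segment started at time $\tau_0-s$ for large $s$. Since $\limsup_{t\to\infty}|u(t,\phi)(x)|\leqslant\theta(x)$ uniformly on bounded initial data, and $\gamma$ takes values in a bounded set, letting $s\to\infty$ yields $|\gamma(\tau_0)(\sigma)(x)|\leqslant\theta(x)$ for all $\sigma\in[-\rho,0]$ and $x\in\Omega$. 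This is precisely the membership $\gamma(\tau_0)\in\Sigma(\phi)$.

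For the second inclusion, under the extra modulus-of-continuity hypothesis \eqref{modulusofcont}, the plan is to upgrade the regularity of every element of the attractor from $H^1_0$ to $H^2\cap H^1_0$. The natural route is to use negative semi-invariance of $\mathcal{A}$ together with the strong-solution regularity supplied by Theorem \ref{T_regularity}. Since $\mathcal{A}\subset G(\tau,\mathcal{A})$, every point $\phi\in\mathcal{A}$ lies on a global solution $\gamma$, and by Theorem \ref{T_regularity}, once \eqref{modulusofcont} holds the corresponding $w$ is a strong solution with $w\in C([0,\infty);H^2(0,1)\cap H^1_0(0,1))$. The point is that strong solutions take values in $D(A)=H^2\cap H^1_0$ for positive times, so restricting a bounded global solution to any window $[\tau-\rho,\tau]$ with $\tau$ in the interior of its domain gives an element of $C([-\rho,0],H^2\cap H^1_0)$. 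I would then note that the $H^2$-bound is uniform: the smoothing estimate $\|A^{\beta}S(t-s)\cdot\|$ used in the proof of Theorem \ref{multi-semi}, combined with the uniform $H^1_0$-bound \eqref{k2_r} from Corollary \ref{limitacionsol}, bounds $\|A\,\gamma(\tau)(\sigma)\|_{L^2}$ uniformly over the attractor, so the inclusion is into a bounded subset of $C([-\rho,0],H^2\cap H^1_0)$.

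The step I expect to be the main obstacle is the first inclusion, specifically transferring the \emph{asymptotic} (limsup) bound of Corollary \ref{limitacionsol} (iii) into a \emph{pointwise, everywhere-in-time} bound on the attractor. The subtlety is that Corollary \ref{limitacionsol} (iii) controls $\limsup_{t\to\infty}$, whereas we need a bound valid at the fixed (possibly negative) time $\tau_0$. The resolution exploits the completeness of global solutions: because $\gamma$ extends to all of $\mathbb{R}$, the value $\gamma(\tau_0)$ is the image under the flow of initial data sitting infinitely far in the past, so it \emph{is} the $t\to\infty$ limit of a forward evolution, and the limsup bound applies directly. Care is needed because solutions are non-unique (the semiflow is multivalued), so one must argue along the specific complete trajectory $\gamma$ rather than for the whole solution set; using the translation and concatenation properties (C2)--(C3) to realize each backward translate $\gamma(\cdot+h)|_{[0,\infty)}$ as an element of $\mathcal{R}$ is what makes this rigorous. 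The second inclusion is comparatively routine once the uniform $H^2$-estimate is assembled from the already-established smoothing bounds.
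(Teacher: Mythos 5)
Your proposal is correct in substance and uses the same ingredients as the paper, but the paper's own proof is a three-line argument that packages the first inclusion differently: it simply observes that, by Corollary \ref{limitacionsol}(iii), $\Sigma(\phi)$ is an attracting set for $G$, and since $\mathcal{A}$ is negatively semi-invariant and attracted by every closed attracting set, $\mathcal{A}\subset\Sigma(\phi)\cap C([-\rho,0],X^\alpha)$; the regularity statement is then delegated wholesale to Theorem \ref{T_regularity}. Your route to the first inclusion instead goes through Theorem \ref{standartglobalattractor}, representing each point of $\mathcal{A}$ as $\gamma(\tau_0)$ for a bounded complete trajectory $\gamma$ and pushing the starting time to $-\infty$ so that the $\limsup$ bound of Corollary \ref{limitacionsol}(iii) (which is uniform over bounded sets of initial functions, hence effectively an $\varepsilon$-$T_\varepsilon$ statement) applies at the fixed time $\tau_0$. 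This is logically equivalent to the paper's minimality argument but more explicit, and it correctly identifies the only real subtlety (converting an asymptotic bound into an everywhere-in-time bound on the attractor); both arguments rely on the same uniformity in Corollary \ref{limitacionsol}(iii). One caveat: your closing claim that the $A^{\beta}S(t-s)$ smoothing estimate from the proof of Theorem \ref{multi-semi} together with \eqref{k2_r} bounds $\|A\gamma(\tau)(\sigma)\|_{L^2}$ uniformly is not justified as stated, since that estimate only reaches $X^{\beta+\frac12}$ with $\beta+\frac12<1$ (the integral $\int_0^t(t-s)^{-1}\,ds$ diverges); reaching $D(A)$ requires the H\"older continuity of $s\mapsto g(s,u^s)$ exploited in Theorem \ref{T_regularity}. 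Since the theorem only asserts the inclusion $\mathcal{A}\subset C([-\rho,0],H^2(0,1)\cap H^1_0(0,1))$ and not boundedness in that space, this extra claim is harmless but should either be dropped or reproved via the strong-solution machinery.
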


\begin{proof}

By Theorem \ref{Tattractor}, $G$ has a global attractor $\mathcal{A}$. Note that, by Corollary \ref{limitacionsol}, $\Sigma(\phi)$ is attracting for $G$. Then,  $\mathcal{A} \subset \Sigma(\phi)\cap C([-\rho,0],X^\alpha)$, $\alpha<1$. The regularity of the solutions in the global attractor $\mathcal{A}$ follows from Theorem \ref{T_regularity}.
\end{proof}

\bibliographystyle{acm}

\begin{thebibliography}{10}

\bibitem{ACM2024}
{\sc Arrieta, J.~M., Carvalho, A.~N., Moreira, E.~M., and Valero, J.}
\newblock Bifurcation and hyperbolicity for a nonlocal quasilinear parabolic
  problem.
\newblock {\em Adv. Differential Equations 29}, 1-2 (2024), 1--26.

\bibitem{BrocheCarvalhoValero2019}
{\sc Broche, R. d. C. D.~S., Carvalho, A.~N., and Valero, J.}
\newblock A non-autonomous scalar one-dimensional dissipative parabolic
  problem: the description of the dynamics.
\newblock {\em Nonlinearity 32}, 12 (2019), 4912--4941.

\bibitem{caballero2021existence}
{\sc Caballero, R., Mar{\'\i}n-Rubio, P., and Valero, J.}
\newblock Existence and characterization of attractors for a nonlocal
  reaction--diffusion equation with an energy functional.
\newblock {\em Journal of Dynamics and Differential Equations\/} (2021), 1--38.

\bibitem{caballero2019robustness}
{\sc Caballero~Toro, R., Carvalho, A.~N., Mar{\'\i}n~Rubio, P., and
  Valero~Cuadra, J.}
\newblock Robustness of dynamically gradient multivalued dynamical systems.
\newblock {\em Discrete and Continuous Dynamical Systems-Series B, 24 (3),
  1049-1077.\/} (2019).

\bibitem{CCJ2024}
{\sc Caraballo, T., Carvalho, A.~N., and Julio, Y.}
\newblock Existence, regularity and asymptotic behavior of solutions for a
  nonlocal chafee-infante problem via semigroup theory.
\newblock {\em Preprint\/} (2024).

\bibitem{caraballo2003comparison}
{\sc Caraballo, T., Ma\'{\i}n-Rubio, P., and Robinson, J.~C.}
\newblock A comparison between two theories for multi-valued semiflows and
  their asymptotic behaviour.
\newblock {\em Set-Valued Analysis 11\/} (2003), 297--322.

\bibitem{CM2021}
{\sc Carvalho, A.~N., and Moreira, E.~M.}
\newblock Stability and hyperbolicity of equilibria for a scalar nonlocal
  one-dimensional quasilinear parabolic problem.
\newblock {\em J. Differential Equations 300\/} (2021), 312--336.

\bibitem{chafee1974bifurcation}
{\sc Chafee, N., and Infante, E.}
\newblock Bifurcation and stability for a nonlinear parabolic partial
  differential equation.
\newblock {\em Bulletin of the American Mathematical Society 80}, 1 (1974),
  49--52.

\bibitem{chafee-infante}
{\sc Chafee, N., and Infante, E.~F.}
\newblock A bifurcation problem for a nonlinear partial differential equation
  of parabolic type.
\newblock {\em Applicable Analysis 4}, 1 (1974), 17--37.

\bibitem{chang2003nonlinear}
{\sc Chang, N., and Chipot, M.}
\newblock Nonlinear nonlocal evolution problems.
\newblock {\em Real Academia de Ciencias Exactas, Fisicas y Naturales. Revista.
  Serie A, Matematicas 97}, 3 (2003), 423--445.

\bibitem{chipot1999asymptotic}
{\sc Chipot, M., and Lovat, B.}
\newblock On the asymptotic behaviour of some nonlocal problems.
\newblock {\em Positivity 3}, 1 (1999), 65--81.

\bibitem{chipot2003remarks}
{\sc Chipot, M., Valente, V., and Vergara~Caffarelli, G.}
\newblock Remarks on a nonlocal problem involving the dirichlet energy.
\newblock {\em Rendiconti del Seminario Matematico della Universit{\`a} di
  Padova 110\/} (2003), 199--220.

\bibitem{FuscoRocha1991}
{\sc Fusco, G., and Rocha, C.}
\newblock A permutation related to the dynamics of a scalar parabolic {PDE}.
\newblock {\em J. Differential Equations 91}, 1 (1991), 111--137.

\bibitem{henry1981geometric}
{\sc Henry, D.}
\newblock {\em Geometric Theory of Semilinear Parabolic Equations}.
\newblock Lecture Notes in Mathematics. Springer, 1981.

\bibitem{kapustyan2012global}
{\sc Kapustyan, A.~V., Pankov, A.~V., and Valero, J.}
\newblock On global attractors of multivalued semiflows generated by the 3d
  b{\'e}nard system.
\newblock {\em Set-Valued and Variational Analysis 20\/} (2012), 445--465.

\bibitem{komatsu1966fractional}
{\sc Komatsu, H.}
\newblock Fractional powers of operators.
\newblock {\em Pacific Journal of Mathematics 19}, 2 (1966), 285--346.

\bibitem{CLLM2020}
{\sc Li, Y., Carvalho, A.~N., Luna, T. L.~M., and Moreira, E.~M.}
\newblock A non-autonomous bifurcation problem for a non-local scalar
  one-dimensional parabolic equation.
\newblock {\em Commun. Pure Appl. Anal. 19}, 11 (2020), 5181--5196.

\bibitem{melnik1998attractors}
{\sc Melnik, V.~S., and Valero, J.}
\newblock On attractors of multivalued semi-flows and differential inclusions.
\newblock {\em Set-Valued Analysis 6}, 1 (1998), 83--111.

\bibitem{michel2001asymptotic}
{\sc Michel, C., and Lue, M.}
\newblock Asymptotic behaviour of some nonlocal diffusion problems.
\newblock {\em Applicable Analysis 80}, 3-4 (2001), 279--315.

\bibitem{pazy}
{\sc Pazy, A.}
\newblock {\em Semigroups of Linear Operators and Applications to Partial
  Differential Equations}.
\newblock Applied mathematical sciences. Springer, 1983.

\end{thebibliography}

\end{document}